\newtheorem{theorem}{Theorem}[section]
\newtheorem{proposition}{Proposition}[section]
\theoremstyle{definition}
\theoremstyle{remark}
\newtheorem{remark}[theorem]{Remark}
\numberwithin{equation}{section}
\def\sinc{\mathop{\mbox{\rm sinc}}\nolimits}
\begin{document}

\title[High order conservative Semi-Lagrangian scheme for the BGK equation]{High order conservative Semi-Lagrangian scheme for the BGK model of the Boltzmann equation}

\author[S. Boscarino]{Sebastiano Boscarino}
\address{Sebastiano Boscarino\\
Department of Mathematics and Informatics  \\
University of Catania\\
95125 Catania, Italy} \email{boscarino@dmi.unict.it}

\author[S.-Y. Cho]{Seung-Yeon Cho}
\address{SeungYeon Cho\\
Department of Mathematics\\
Sungkyunkwan University\\
Suwon 440-746, Republic of Korea}
\email{chosy89@skku.edu}

\author[G. Russo]{Giovanni Russo}
\address{Giovanni Russo\\
Department of Mathematics and Informatics  \\
University of Catania\\
95125 Catania, Italy} \email{russo@dmi.unict.it}

\author[S.-B. Yun]{Seok-Bae Yun}
\address{Seok-Bae Yun\\
Department of Mathematics\\
Sungkyunkwan University\\
 Suwon 440-746, Republic of Korea}
\email{sbyun01@skku.edu}

\keywords{BGK model, Boltzmann equation, semi-Lagrangian scheme, Conservative correction, Discrete Maxwellian}

\thanks{G. Russo is partially supported by ITN-ETN Horizon 2020 Project ModCompShock, Modeling and Computation on Shocks and Interfaces, Project Reference 642768. S. Boscarino is supported by the University of Catania (``Piano della Ricerca 2016/2018, Linea di intervento 2''). S.-B. Yun was supported by Samsung Science and Technology Foundation under Project Number SSTF-BA1801-02.}

\maketitle

\begin{abstract}
	In this paper, we present a conservative semi-Lagrangian finite-difference scheme for the BGK model. Classical semi-Lagrangian finite difference schemes, coupled with an L-stable treatment of the collision term, allow large time steps, for all the range of Knudsen number \cite{GRS,RS,SP}.  Unfortunately, however, such schemes are not conservative.  There are two main sources of lack of conservation. First, 
	when using classical continuous Maxwellian, conservation error is negligible only if velocity space is resolved with sufficiently large number of grid points. However, for a small number of grids in velocity space such error is not negligible, because the parameters of the Maxwellian do not coincide with the discrete moments. 
	Secondly, the non-linear reconstruction used to prevent oscillations destroys the translation invariance which is at the basis of the conservation properties of the scheme. As a consequence the schemes show a wrong shock speed in the limit of small Knudsen number. To treat the first problem and ensure machine precision conservation of  mass, momentum and energy with a relatively small number of velocity grid points, we replace the continuous Maxwellian with the discrete Maxwellian introduced in \cite{M}. The second problem is treated by implementing a conservative correction procedure based on the flux difference form as in \cite{RQT}.
	In this way we can construct a conservative semi-Lagrangian scheme which is Asymptotic Preserving (AP) for the underlying Euler limit, as the Knudsen number vanishes. The effectiveness of the proposed scheme is demonstrated by extensive numerical tests.
\end{abstract}

\section{Introduction}
The dynamics of a non-ionized dilute gas at mesoscopic level is described by the celebrated Boltzmann equation \cite{Cercignani}.
The development of efficient numerical methods for its solution, however, constitutes a formidable challenge, due, among others, to the high dimensionality of the problem, the complicated structure of the collision operator, the need to preserve the collision invariants at a discrete level, and the stiffness issue arising when the  Knudsen number is very small.

In view of this situation, Bhatnaghar, Gross and Krook, in 1954, suggested a relaxation model of the Boltzmann equation, which now goes by the name of the BGK model \cite{BGK}. This approximation preserves several important qualitative features of the original Boltzmann equation, such as conservation of mass, momentum and energy, H-theorem and relaxation to equilibrium, and is now widely used as a simplified alternative to the Boltzmann equation because it is much less expensive to treat at a  numerical level.

Initial value problem for the BGK model on a periodic domain reads
\begin{align}\label{bgk}
\begin{split}
\frac{\partial{f}}{\partial{t}} + v \cdot \nabla_x{f} &= \frac{1}{\kappa}\left(\mathcal{M}(f)-f\right)\cr
f(x,v,0) &= f_0(x,v).
\end{split}
\end{align}
The velocity distribution function $f(x,v,t)$ represents the mass density of particles at point $(x,v)\in\mathbb{T}^d\times \mathbb{R}^d$ in phase space, at time $t>0$.
The Knudsen number $\kappa>0$ is defined as a ratio between the mean free path and a macroscopic characteristic length of the physical system.
The local Maxwellian $\mathcal{M}(f)$ is given by

\begin{align}\label{CM}
\mathcal{M}(f)(x,v,t):=\frac{\rho(x,t)}{\sqrt{\left(2 \pi T(x,t) \right)^d}} \exp\left(-\frac{|v-U(x,t)|^2}{2T} \right),	
\end{align}
where the macroscopic fields of local density $\rho(x,t) \in \mathcal{R}^+$, bulk velocity $U(x,t) \in \mathbb{R}^d$ and local temperature $T(x,t) \in \mathbb{R}^+$ are defined
through the following relation:
\begin{align}\label{Mom}
(\rho(x,t),\rho(x,t) U(x,t), E(x,t)^T = \langle f\phi(v)\rangle,
\end{align}
where
\[
\phi(v) = \left(1,v,\frac{1}{2}|v|^2\right)^T, \quad \textrm{and} \quad \langle g\rangle = \int_{\mathbb{R}^d} g(v)d v.
\]
The physical quantity $E(x,t)$ is the total energy density per unit volume, and it is related to the temperature $T(x,t)$ by the following relation:
\[
E(x,t)=\frac{d}{2}\rho(x,t)T(x,t)+\frac{1}{2}\rho(x,t)|U(x,t)|^2.
\]
The BGK model (\ref{bgk}) satisfies the main properties of the Boltzmann equation such as: conservation of mass, momentum, and energy:
\begin{align}\label{cancellation}
\langle \mathcal{M}(f) \phi(v)\rangle = \langle f \phi(v)\rangle,
\end{align}
as well as entropy dissipation:
\[
\quad \, \int_{\mathbb{R}^{d}} (\mathcal{M}(f) - f) \ln f dv \le 0.
\]
Note that the equilibrium state clearly is the local Maxwellian determined by $f$. Indeed the collision operator vanished for $f = \mathcal{M} (f)$. Therefore, the BGK model
gives the correct Euler limit as $\kappa\rightarrow0$, i.e., the moments of solution to (\ref{bgk}), in the limit of vanishing Knudsen number, satisfy the macroscopic
compressible Euler equations for a monoatomic gas \cite{BGL,BGP}:
\begin{align}\label{LimE}
\begin{split}
\partial_t\rho  + \nabla \cdot (\rho U) &= 0,\cr
\partial_t(\rho U) + \nabla \cdot (\rho U \otimes U + pI) &= 0,\cr
\partial_t E +  \nabla \cdot ((E + p)U) &= 0
\end{split}
\end{align}
with pressure $p$ given by the constitutive relation to close the system (\ref{LimE}) $p = \rho T$.

Navier-Stokes equations can be derived by the Chapman-Enskog equation (see for example \cite{Chapman-Cowling}), by inserting a formal expansion of the distribution function $f$ in terms of the Knudsen number. To zero-th order one obtains compressible Euler's equations, while to first order in $\kappa$ one derives the Navier-Stokes equations associated to the BGK model.

We mention that such Navier-Stokes limit is slightly inconsistent with the one obtained from the Boltzmann equation,
in that the Prandtl number ${\rm Pr} = c_p\mu/k$ ($c_p$ is the specific heat at constant pressure, $\mu$ is the viscosity and $k$ the thermal conductivity) derived from the BGK model is numerically different from the value computed using the Boltzmann equation. Several techniques have been proposed to overcome this drawback, the most widely adopted being the  so-called Ellipsoidal BGK (ES-BGK), see \cite{ABLP,ALPP,Holway}. 
A semi-Lagrangian method for the ES-BGK model has recently been proposed and analyzed in \cite{RY}.

The aim of this paper is to develop high order conservative semi-Lagrangian (SL) finite-difference schemes for the BGK model of the Boltzmann equation.

Conservative semi-Lagrangian methods have recently attracted a lot of attention, especially in the context oc the Vlasov-Poisson model (see \cite{CMS-Vlasov-2010,FSB-Vlasov-2001}). 

General procedures have been developed for the construction of conservative SL schemes, as in \cite{QS-SL-2011}, however such procedures are often restricted to treat one dimensional problems.

SL schemes for BGK models have recently received increasing interest \cite{GRS,RS,RSY,RY,SP} since the SL treatment avoids the classical CFL stability restriction. Furthermore, the implicit treatment of the collision term, which can be easily computed, allows the methods to capture the underlying fluid dynamic limit. 

Unfortunately, however, classical SL schemes do not necessarily conserve the total mass, momentum and energy, and the error may become more relevant as  the Knudsen number gets smaller \cite{GRS}. 

We identify the cause of lack of conservation in the use of continuous Maxwellian in the collision term, and in the non-linear weights adopted in the high order non-oscillatory reconstruction, and propose a remedy based on the use of a discrete Maxwellian (as in \cite{M}) and on a conservative correction to fully restore the conservation properties of the schemes, such as the one adopted in \cite{RQT} in the case of the Vlasov-Poisson equation.


The paper is organized as follows. 
Section 2 is devoted to first order schemes. It is shown that  the conservation error depends sensitively on the number of velocity grids, and the cause is identified in the use of a continuous Maxwellian in a discrete scheme. We prove that the SL schemes can be made conservative within round-off errors by adopting a discrete Maxwellian in place of the classical continuous one.

Section 3 considers high order SL schemes, which exhibit lack of conservation even with the use of the discrete Maxwellian in the collision term. A conservative correction is then adopted, which restores exact conservation of the methods (within round-off). 
Section 4 is devoted to linear stability analysis, to explain the stability limitations introduced by the conservative correction. 
In Section 5 we present several numerical tests, which confirm the expected accuracy and conservation properties of the proposed schemes, and provide numerical evidence of the AP property of the scheme towards the underlying fluid dynamic limit as the Knudsen number vanishes. 
At the end of we draw some conclusions.
The paper deals with 1D case. The extension to the multi-dimensional case is briefly mentioned in the conclusion.

\section{First Order Semi-Lagrangian schemes}\label{RevMot}

We start from  the basic first order semi-Lagrangian scheme \cite{RSY}, and gradually build up to derive our conservative high order semi-Lagrangian scheme (see Section \ref{high order section}).

\subsection{First order SL scheme}\label{Step I}
We start from the characteristic formulation of  (\ref{bgk}) :
\begin{equation}\label{characteristic}
\begin{split}
&\frac{df}{dt}=\frac{1}{\kappa}(\mathcal{M}(f)-f),\quad \frac{dx}{dt}=v,
\end{split}
\end{equation}
\noindent subject to the initial data: $f(x,v,0)=f_0(x,v)$.

We consider one dimensional problem in space and velocity, and we divide the spatial and velocity domain into uniform grids with mesh spacing $\Delta x$ and $\Delta v$, respectively. We also use uniform time step $\Delta t$. Given an computational domain,
$ [x_{min},x_{max}] \times [v_{min},v_{max}] \times [0,T^f] $, we denote the grid points by
\begin{align*}
\begin{array}{ll}
x_i=x_{min}+(i-\frac{1}{2})\Delta x,  &(i=1,...,N_x)\cr
v_j=v_{min} + j\Delta v,  &(j=0,...,N_v)\cr
t^n=n\Delta t, &(n=0,...,N_t),
\end{array}
\end{align*}
where $N_x$, $N_v+1$ and $N_t$ are the number of grid nodes in space, velocity and  time, respectively, so that $x_{max}  = x_{min} + N_x\Delta x$, $v_{max}=v_{min} + N_v\Delta v $ and $T^f=N_t \Delta t$.

Let $f_{i,j}^{n}$ denote a discrete approximation of $f(x_i,v_j,t^n)$ and $\phi(v_j)= \left(1,v_j,\frac{v_j^2}{2}\right)^T.$ Applying first order semi-Lagrangian implicit Euler (IE-SL) scheme to (\ref{characteristic}), we get
\begin{equation}\label{IE}
f_{i,j}^{n+1}=\tilde{f}_{ij}^n+\frac{\Delta{t}}{\kappa}\left(\mathcal{M}(f^{n+1}_{i,j})-f_{i,j}^{n+1}\right),
\end{equation}
where $\tilde{f}_{ij}^n$ is an approximation of $f(x_i-v_j\Delta{t},v_j,n\Delta{t})$ obtained by a suitable interpolation from $\{f^n_{ij}\}$. Note that
linear reconstruction will be sufficient for first order SL scheme, while a higher order non-oscillatory reconstruction is necessary for high order accuracy.
The Maxwellian $\mathcal{M}(f_{i,j}^{n+1})$ is given by
\begin{align}\label{1CM}
\mathcal{M}(f_{i,j}^{n+1}) =\frac{\rho_i^{n+1}}{\sqrt{2 \pi T_i^{n+1} }}\exp\left(-\frac{|v_j-U_i^{n+1}|^2}{2T_i^{n+1}}\right),
\end{align}
where discrete macroscopic moments are constructed from $f^{n+1}$ as follows:
\begin{align*}
\begin{pmatrix}
\rho_{i}^{n+1}\\
\rho_{i}^{n+1} U_{i}^{n+1}\\
E_{i}^{n+1}
\end{pmatrix}
&=\sum_{j=0}^{N_v}f_{i,j}^{n+1}
\phi(v_j)\Delta{v},
\end{align*}
which is equivalent to using midpoint rule in the computation of the momentums, Eq. \eqref{Mom}.

We now employ a technique that enables one to explicitly solve the implicit scheme \eqref{IE}.
For this, we multiply both sides in (\ref{IE}) by $\phi(v_j)$, sum over $j$, and use the property that
the moments of $\mathcal{M}(f_{i,j}^{n+1})-f^{n+1}_{i,j}$ up to second order vanish, to obtain
\begin{align*}
\sum_j^{N_v+1} f_{i,j}^{n+1}\phi(v_j) \Delta v= \sum_j^{N_v+1} \tilde{f}_{ij}^n\phi(v_j)\Delta v.
\end{align*}
This gives
\begin{align*}
\begin{pmatrix}
{\rho}_{i}^{n+1}\\
{U}_{i}^{n+1}\\
{E}_{i}^{n+1}
\end{pmatrix}
&=\begin{pmatrix}\tilde{\rho}_{i}^{n}\\
\tilde{U}_{i}^{n}\\
\tilde{E}_{i}^{n}\end{pmatrix},
\end{align*}
with
\begin{align*}
\begin{pmatrix}
\tilde{\rho}_{i}^{n}\\
\tilde{\rho}_{i}^{n}\tilde{U}_{i}^{n}\\
\tilde{E}_{i}^{n}
\end{pmatrix}
&=\sum_{j=0}^{N_v}\tilde{f}_{ij}^n
\phi(v_j)\Delta{v}, \quad \tilde{E}_{i}^{n}=\frac{1}{2}\tilde{\rho}_{i}^{n}|\tilde{U}_{i}^{n}|^2+\frac{1}{2}\tilde{\rho}_{i}^{n}\tilde{T}_{i}^{n}.
\end{align*}
Therefore,  we can legitimately replace $\mathcal{M}(f_{i,j}^{n+1})$ with $\mathcal{M}(\tilde{f}_{i,j}^{n})$, so that the scheme becomes
\begin{equation*}
f_{i,j}^{n+1}=\tilde{f}_{ij}^n+\frac{\Delta{t}}{\kappa}\left(\mathcal{M}(\tilde{f}^{n}_{ij})-f_{i,j}^{n+1}\right),
\end{equation*}
which gives
\begin{align}\label{numSol}
f_{i,j}^{n+1} = \frac{\kappa \tilde{f}_{ij}^n + \Delta t \mathcal{M}(\tilde{f}_{ij}^n)}{\kappa + \Delta t}.
\end{align}
This approach has been fruitfully used, for example, in \cite{GRS,PP, RSY, RS}. Note that scheme \eqref{numSol} allows us to use large CFL$>1$ numbers.

Summarizing, we have the following procedure (see Fig.\ref{fig1First}):
\begin{enumerate}
	\item Use linear interpolation to obtain $\tilde{f}_{ij}^n$ from $\{f_{i,j}^n\}$.
	\item Compute $\mathcal{M}(\tilde{f}_{ij}^n)$ from $\{\tilde{f}_{ij}^n\}$ by using the approximate macroscopic moments, i.e. $(\tilde{\rho}_{i}^{n},\tilde{U}_{i}^{n},\tilde{E}_{i}^{n})^T$.
	\item Compute numerical solution using \eqref{numSol}.
\end{enumerate}

\begin{figure}[h]\label{fig1}
	\centering
	\includegraphics[width=0.7\linewidth]{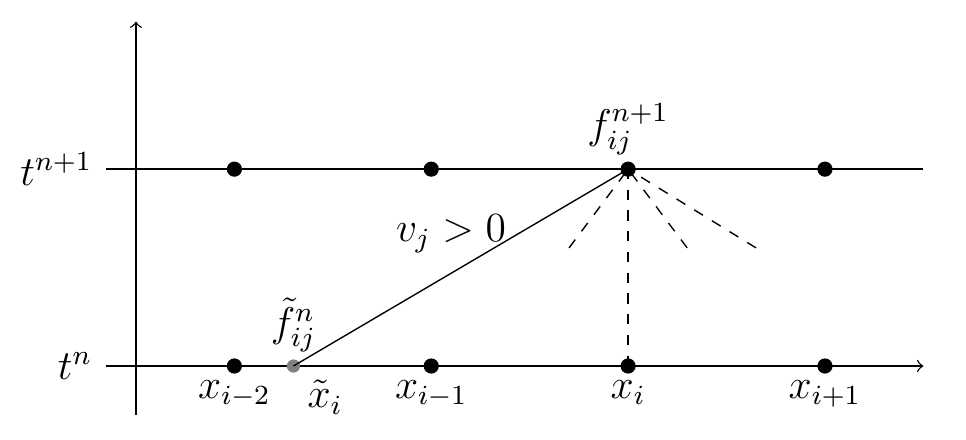}			
	\caption{Representation of the implicit first order scheme.}	\label{fig1First}
\end{figure}

We apply the scheme to the propagation of a single shock, where we can compare the numerical solution to the exact one, and therefore accurately check the conservation properties of the scheme.

\subsection{Test 1}
\label{sec:test1}
We apply IE-SL scheme (\ref{numSol}) to equation (\ref{bgk}) with
$f_0$ given by the Maxwellian w.r.t macroscopic quantities
\begin{equation*}
\displaystyle(\rho_0,u_0,p_0) = \left\{\displaystyle\begin{array}{lr}
\left(\frac{(\gamma+1)M^2}{(\gamma-1)M^2+2},\frac{2\sqrt{\gamma}(M^2-1)}{(\gamma+1)M},1+\frac{2\gamma(M^2-1)}{(\gamma+1)}\right), & \text{for } x\le 0.5\\[3mm]
(1,0,1), & \text{for } x>0.5.\\
\end{array}\right.
\end{equation*}
We take the Knudsen number $\kappa=10^{-6}$, the polytropic constant $\gamma = 3$ (corresponding to a polytropic gas with one degree of freedom per gas molecule) and Mach number $M = 2 $.

To prevent the solution from reaching the boundary, final time is taken $T_f=0.4$. We used free flow boundary conditions and performed the computation on $ (x, v) \in [0,5] \times [-20,20]$.

The results are summarized in Table \ref{tab1}, where the conservation errors are reported for various values of $N_x$ and $N_v$.

\begin{table}[hbt]
	\centering
	{\begin{tabular}{|ccccccc|}
			\hline
			\multicolumn{1}{ |c| }{} & \multicolumn{3}{ |c| }{IE-SL-Linear-CM, $N_x=100$} & \multicolumn{3}{ |c| }{IE-SL-Linear-CM, $N_x=200$} \\ \hline
			\multicolumn{1}{ |c }{$N_v$}& \multicolumn{1}{ |c  }{Mass} & \multicolumn{1}{ |c }{Momentum}& \multicolumn{1}{ |c| }{Energy} & \multicolumn{1}{ |c  }{Mass} & \multicolumn{1}{ |c }{Momentum}& \multicolumn{1}{ |c| }{Energy} \\ \hline	
			\multicolumn{1}{ |c|  }{$30$}& 3.63e-04 & 0.0012 & \multicolumn{1}{ c| }{0.0021} & 9.10e-04 & 0.0030 & 0.0051 \\
			\multicolumn{1}{ |c|  }{$40$}& 5.54e-08 & 3.26e-07 & \multicolumn{1}{ c| }{6.03e-07} & 1.15e-07 & 6.43e-07 & 1.25e-06 \\
			\multicolumn{1}{ |c|  }{$50$}& 8.55e-13 & 7.81e-12 & \multicolumn{1}{ c| }{1.43e-11} & 1.78e-12 & 1.54e-11 & 2.97e-11 \\
			\multicolumn{1}{ |c|  }{$60$}& 3.55e-14 & 4.96e-14 & \multicolumn{1}{ c| }{3.89e-14} & 7.45e-14 & 8.24e-14 & 7.23e-14 \\
			\multicolumn{1}{ |c|  }{$90$}& 3.24e-14 & 4.82e-14 & \multicolumn{1}{ c| }{3.77e-14} & 7.16e-14 & 7.32e-14 & 7.45e-14 \\
			\hline
	\end{tabular}}
	\captionof{table}{$CFL=4$, $\kappa=10^{-6}$. First order scheme, conservation error of discrete moments in relative $L_1$ norm for single shock with velocity domain $[-20,20]$. } \label{tab1}
\end{table}

From the results we can make the following observations: 

\begin{enumerate}
	\item Table 1 shows that the first order IE-SL scheme with enough points in velocity space maintains conservation within machine precision, independently of the number of grid point in space;
	\item the same scheme with smaller number of points in velocity produces non-negligible conservation errors.
\end{enumerate}
This numerical evidence suggests that the convection part is conservative, while errors in conservation are a consequence of the numerical approximation of the relaxation term. 
The lack of conservation is indeed due to the use of a continuous Maxwellian on a discrete scheme in velocity: the parameters of the continuous Maxwellian do not coincide with the discrete moments, they are just approximated by them with spectral accuracy when the integrals are replaced by a summation. The spectral accuracy of the quadrature explains, for example, the dramatic drop of the conservation error when the number of points in velocity is increased from 40 to 50.




\subsection{Classical SL scheme with the discrete Maxwellian}
\label{DiscMax}

In this section, we replace the continuous Maxwellian with the discrete Maxwellian to resolve the problem of strong dependence of the conservation error on the number of velocity grids.

\subsubsection{Discreate Maxwellian}
We start by briefly describing the discrete Maxwellian  introduced in \cite{M}. In that work, the author proved that a discrete entropy minimization problem has a uniqueness solution called the discrete Maxwellian($d\mathcal{M}$). Moreover, together with an assumption that if a set of discrete velocity points $\{v_j\}$ is rank $d+2$, there exist a unique vector $a(x,t) \in \mathbb{R}^{d+2}$ such that the following exponential characterization holds:
\begin{align}\label{DisM}
d\mathcal{M}(x,v_j,t):= \exp \left(a(x,t) \cdot \phi(v_j) \right),
\end{align}
if and only if there exists a set of $\{g_j>0\}_j$ such that
$$ \sum_j f(x,v_j,t) \phi(v_j)(\Delta v)^d =\sum_j g_j \phi(v_j)(\Delta v)^d $$
where $f(x,v_j,t)$ are given. The vector $a(x,t)$ is determined by solving the following non-linear system:
\begin{equation}\label{dm prb}
\sum_{j=0}^{N_v}  f(x,v_j,t) \phi(v_j)\Delta v=\sum_ {j=0}^{N_v} \exp \big(a(x,t) \cdot \phi(v_j) \big)  \phi(v_j)\Delta v.
\end{equation}
In practice, employing a Newton algorithm, we find $a(x,t)$ such that
\begin{equation}\label{dm newton}
\max_{1\leq \ell\leq 3}\left|\sum_{j=0}^{N_v} \left( f(x,v_j,t)-d\mathcal{M}(x,v_j,t) \right)  \phi_\ell(v_j)\Delta v\right|<tol
\end{equation}
for arbitrary small tolerance(tol). Throughout this paper, we take $tol$ to be the order of $10^{-14}$. Here, we denote the $\ell$th component of $\phi(v_j)$ by $\phi_\ell(v_j)$, $\ell=1,2,3$. With the use of discrete Maxewllian in \eqref{numSol}, 
\begin{align}\label{numSol dm}
f_{i,j}^{n+1} = \frac{\kappa \tilde{f}_{ij}^n + \Delta t d\mathcal{\mathcal{M}}(\tilde{f}_{ij}^n)}{\kappa + \Delta t},
\end{align}
and it is possible to prove the following estimate on the conservation error (see Appendix \ref{SM proof of conservation error}):
\begin{align*}
&\max_{1\leq \ell\leq 3}\left|\sum_{i = 1}^{N_x}\sum_{j=0}^{N_v}\left(f_{i,j}^{Nt} - f_{i,j}^0\right)\phi_\ell(v_j) \Delta v\Delta x\right|  \leq  \frac{N_t \Delta{t} }{\kappa + \Delta t }(x_{max}-x_{min}) tol.
\end{align*}

On the other hand, we recall that, in discrete velocity models, we need to take the velocity domain sufficiently large to secure correct profile of macroscopic moments, especially  when there is a large space variation of mean velocity $U$ and temperature $T$.


Therefore, it is necessary to balance the size of the velocity domain needed for the accurate computation of macrosocpic fields, and the efficient choice of smallest possible number of grids to guarantee the efficient performance of the scheme. (See Test 2 in Section 5.)

Such issues of the optimal choice of the grid points in velocity space 
is not considered here and will be left for future investigation.

\section{High order schemes and conservative correction}\label{high order section}

Several techniques can be adopted to obtain  high order accuracy and to ensure the shock
capturing properties  near the fluid regime, avoiding
spurious oscillations. 
Here we consider two of the schemes adopted in \cite{GRS}, namely third order schemes obtained by 
combining high order methods in time (RK3 and BDF3) with a high order non-oscillatory spatial interpolation technique that we call generalized WENO (G-WENO)\cite{CFR} to obtain  high order accuracy and to ensure the shock capturing properties of the proposed schemes near the fluid regime, avoiding spurious oscillations (see the Section 5).

We repeat the same moving shock test using third order schemes, and the results are summarized in Table  
\ref{TabRK3} for SL schemes using Runge-Kutta time advancement (RK3-W35), and in Table \ref{TabBDF3} for the BDF-based SL schemes (BDF3-W35). 
Fully resolved high order schemes both in space and velocity produce finite conservation error, which is 
much larger than the conservation error  of the first order scheme, shown in Table \ref{tab1}. 

%
This indicates that there are cases where high order schemes may show even bigger conservation errors compared to those obtained by the first order scheme.

The main qualitative difference between first order and high order methods is that the former uses a fixed stencil for the linear interpolation at the foot of the characteristics, while high order non-oscillatory reconstructions such as G-WENO use a weighted sum of reconstructions on different stencils, the weight depending on the local regularity properties of the function to be reconstructed. 
As a result,  in the first order SL scheme the interpolation
weights are the same for all intervals, whereas in high order SL schemes, due to the nonlinearity of the non-oscillatory reconstruction, 
the interpolation weights are
not the same for all intervals, thus destroying the translation invariance which is at the basis of the conservation property of the schemes.

\begin{table}
	\centering
	{\begin{tabular}{|ccccccc|}
			\hline
			\multicolumn{1}{ |c| }{ } & \multicolumn{3}{ c| }{Classical RK3-W35-CM} & \multicolumn{3}{ c| }{Classical RK3-W35-DM} \\ \hline
			\multicolumn{1}{ |c }{$(N_x,N_v)$}& \multicolumn{1}{ |c  }{Mass} & \multicolumn{1}{ |c }{Momentum}& \multicolumn{1}{ |c }{Energy} & \multicolumn{1}{ |c  }{Mass} & \multicolumn{1}{ |c }{Momentum}& \multicolumn{1}{ |c| }{Energy} \\ \hline	
			\multicolumn{1}{ |c|  }{$(100,42)$}& 1.28e-03 & 1.25e-02 & \multicolumn{1}{ c| }{1.40e-01} &1.22e-03
			&1.29e-02
			&1.47e-02\\
			\multicolumn{1}{ |c|  }{$(100,50)$}& 1.06e-03 & 1.31e-02 & \multicolumn{1}{ c| }{1.47e-02} & 1.06e-03
			&1.36e-02
			&1.47e-02
			\\
			\multicolumn{1}{ |c|  }{$(100,60)$}& 1.43e-03 & 1.26e-02 & \multicolumn{1}{ c| }{1.49e-02} & 1.43e-03
			&1.26e-02
			&1.49e-02
			\\
			\multicolumn{1}{ |c|  }{$(100,90)$}& 1.35e-03 & 1.28e-02 & \multicolumn{1}{ c| }{1.48e-02} &1.35e-03
			&1.28e-02
			&1.48e-02
			\\ \hline
			\multicolumn{1}{ |c|  }{$(200,42)$}& 1.54e-03 & 1.30e-02 & \multicolumn{1}{ c| }{1.45e-02} &1.48e-03
			&1.33e-02
			&1.52e-02
			\\
			\multicolumn{1}{ |c|  }{$(200,50)$}& 1.30e-03 & 1.35e-02 & \multicolumn{1}{ c| }{1.51e-02} &1.30e-03
			&1.35e-02
			&1.51e-02
			\\
			\multicolumn{1}{ |c|  }{$(200,60)$}& 1.68e-03 & 1.30e-02 & \multicolumn{1}{ c| }{1.53e-02} &1.68e-03
			&1.30e-02
			&1.53e-02
			\\
			\multicolumn{1}{ |c|  }{$(200,90)$}& 1.60e-03 & 1.32e-02 & \multicolumn{1}{ c| }{1.53e-02} &1.60e-03
			&1.32e-02
			&1.53e-02
			\\ \hline
			\multicolumn{1}{ |c|  }{$(400,42)$}& 1.68e-03 & 1.32e-02 & \multicolumn{1}{ c| }{1.47e-02} &1.61e-03
			&1.35e-02
			&1.54e-02
			\\
			\multicolumn{1}{ |c|  }{$(400,50)$}& 1.42e-03 & 1.36e-02 & \multicolumn{1}{ c| }{1.53e-02} &1.42e-03
			&1.36e-02
			&1.53e-02\\
			\multicolumn{1}{ |c|  }{$(400,60)$}& 1.80e-03 & 1.32e-02 & \multicolumn{1}{ c| }{1.55e-02} &1.80e-03
			&1.32e-02
			&1.55e-02
			\\
			\multicolumn{1}{ |c|  }{$(400,90)$}& 1.73e-03 & 1.34e-02 & \multicolumn{1}{ c| }{1.54e-02} &1.73e-03
			&1.34e-02
			&1.54e-02
			\\ \hline
			\multicolumn{1}{ |c|  }{$(800,60)$}& 1.86e-03 & 1.33e-02 & \multicolumn{1}{ c| }{1.55e-02} &1.86e-03
			&1.33e-02
			&1.55e-02
			\\
			\multicolumn{1}{ |c|  }{$(800,90)$}& 1.80e-03 & 1.34e-02 & \multicolumn{1}{ c| }{1.55e-02} &1.80e-03
			&1.34e-02
			&1.55e-02
			\\
			\hline
	\end{tabular}}
	\captionof{table}{$CFL=2$, $\kappa=10^{-6}$. High order schemes, conservation error of discrete moments in relative $L_1$ norm for single shock problem with velocity domain $[-20,20]$. 
	} \label{TabRK3}
\end{table}

\begin{table}
	\centering
	{\begin{tabular}{|ccccccc|}
			\hline
			\multicolumn{1}{ |c| }{ } & \multicolumn{3}{ |c| }{Classical BDF3-W35-CM} & \multicolumn{3}{ |c| }{Classical BDF3-W35-DM} \\ \hline
			\multicolumn{1}{ |c }{$(N_x,N_v)$}& \multicolumn{1}{ |c  }{Mass} & \multicolumn{1}{ |c }{Momentum}& \multicolumn{1}{ |c }{Energy} & \multicolumn{1}{ |c  }{Mass} & \multicolumn{1}{ |c }{Momentum}& \multicolumn{1}{ |c| }{Energy}\\ \hline	
			\multicolumn{1}{ |c|  }{$(100,42)$}& 1.73e-03 & 1.03e-02 & \multicolumn{1}{ c| }{1.39e-02} &1.72e-03
			&1.03e-02
			&1.39e-02
			\\
			\multicolumn{1}{ |c|  }{$(100,50)$}& 1.58e-03 & 1.04e-02 & \multicolumn{1}{ c| }{1.38e-02} &1.58e-03
			&1.04e-02
			&1.38e-02
			\\
			\multicolumn{1}{ |c|  }{$(100,60)$}& 1.73e-03 & 1.00e-02 & \multicolumn{1}{ c| }{1.38e-02} &1.73e-03
			&1.00e-02
			&1.38e-02
			\\
			\multicolumn{1}{ |c|  }{$(100,90)$}& 1.75e-03 & 1.03e-02 & \multicolumn{1}{ c| }{1.40e-02} & 1.75e-03 & 1.03e-02 & 1.40e-02\\ \hline
			\multicolumn{1}{ |c|  }{$(200,42)$}& 2.02e-03 & 1.10e-02 & \multicolumn{1}{ c| }{1.46e-02} &2.01e-03
			&1.10e-02
			&1.46e-02
			\\
			\multicolumn{1}{ |c|  }{$(200,50)$}& 1.88e-03 & 1.11e-02 & \multicolumn{1}{ c| }{1.45e-02} &1.88e-03
			&1.11e-02
			&1.45e-02
			\\
			\multicolumn{1}{ |c|  }{$(200,60)$}& 2.01e-03 & 1.07e-02 & \multicolumn{1}{ c| }{1.44e-02} & 2.01e-03 & 1.07e-02 & 1.44e-02\\
			\multicolumn{1}{ |c|  }{$(200,90)$}& 2.03e-03 & 1.10e-02 & \multicolumn{1}{ c| }{1.46e-02} & 2.03e-03 & 1.10e-02 & 1.46e-02\\ \hline
			\multicolumn{1}{ |c|  }{$(400,42)$}& 2.18e-03 & 1.14e-02 & \multicolumn{1}{ c| }{1.49e-02} &2.18e-03
			&1.14e-02
			&1.49e-02
			\\ 
			\multicolumn{1}{ |c|  }{$(400,50)$}& 2.05e-03 & 1.15e-02 & \multicolumn{1}{ c| }{1.48e-02} & 2.05e-03 & 1.15e-02 & 1.48e-02\\
			\multicolumn{1}{ |c|  }{$(400,60)$}& 2.16e-03 & 1.11e-02 & \multicolumn{1}{ c| }{1.47e-02} & 2.16e-03 & 1.11e-02 & 1.47e-02\\
			\multicolumn{1}{ |c|  }{$(400,90)$}& 2.19e-03 & 1.14e-02 & \multicolumn{1}{ c| }{1.49e-02} & 2.19e-03 & 1.14e-02 & 1.49e-02\\ \hline
			\multicolumn{1}{ |c|  }{$(800,60)$}& 2.24e-03 & 1.13e-02 & \multicolumn{1}{ c| }{1.49e-02} & 2.24e-03 & 1.13e-02 & 1.49e-02\\
			\multicolumn{1}{ |c|  }{$(800,90)$}& 2.27e-03 & 1.16e-02 & \multicolumn{1}{ c| }{1.51e-02} & 2.27e-03 & 1.16e-02 & 1.51e-02\\
			\hline
	\end{tabular}}
	\captionof{table}{$ CFL=2$, $\kappa=10^{-6}$. High order schemes, conservation error of discrete moments in relative $L_1$ norm for single shock problem with velocity domain $[-20,20]$. }  \label{TabBDF3}
\end{table}

\subsection{Conservative correction and discrete Maxwellian}

In subsection \ref{DiscMax}, we achieved a machine precision conservation error for first order scheme by implementing the discrete Maxwellian in place of the continuous one.
This remedy, however, is not sufficient in high order implementations, as was indicated in Table \ref{TabRK3}, \ref{TabBDF3}.


To overcome this, we modify the scheme \eqref{numSol dm} using the conservative correction procedure based on a flux difference form \cite{PPR,RQT} to derive our main scheme.

For clarity of exposition, we start by describing the procedure in the case of first order schemes, although its real benefit appears in its application to high order methods.

The conservative method can be viewed as a predictor-corrector method. It is based on a SL non-conservative prediction, and a conservative correction. 


%


\begin{figure}[h!]
	\centering
	\includegraphics[width=0.6\linewidth]{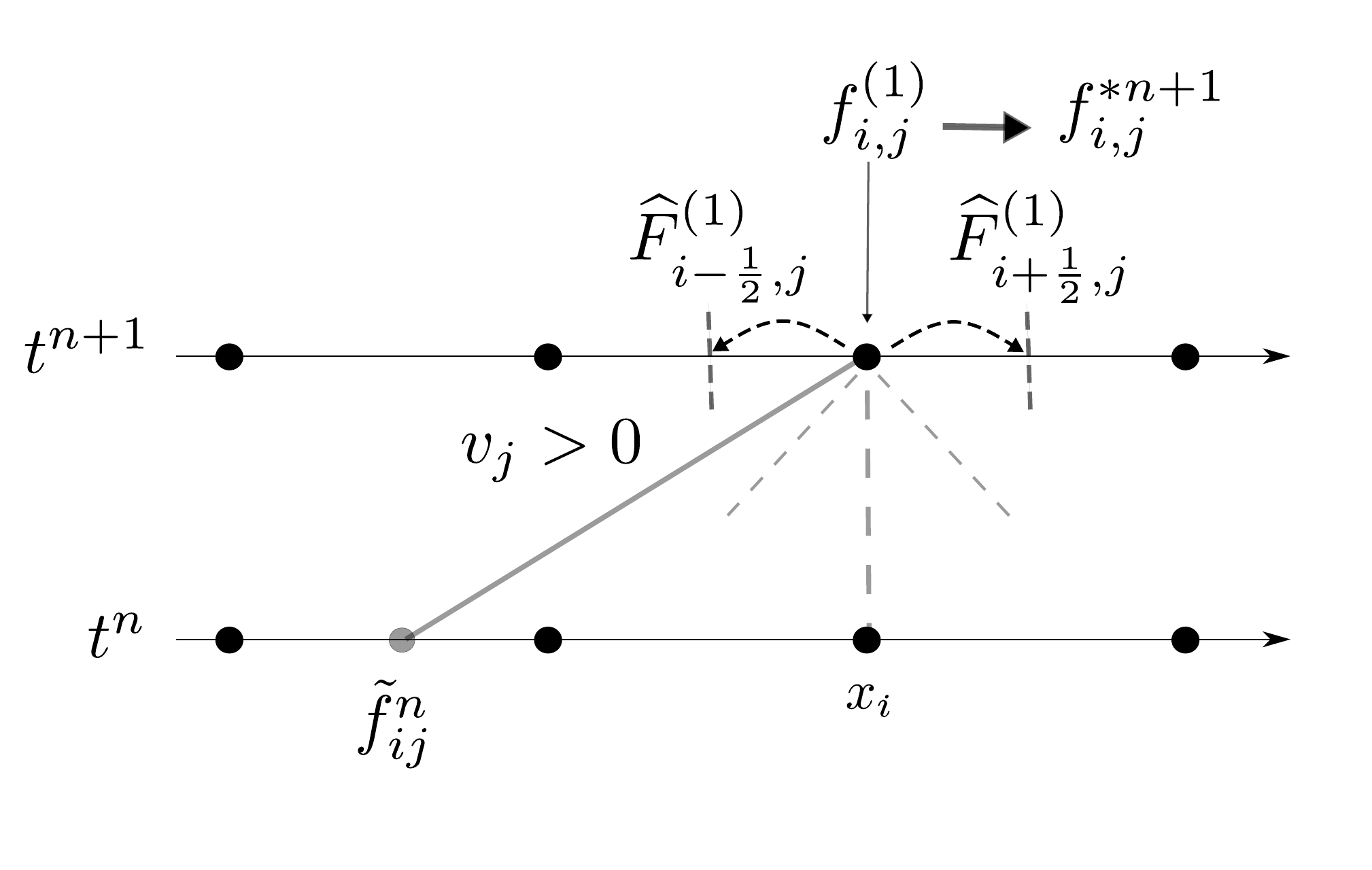}			\caption{Representation of first order scheme with conservative correction.}\label{fig2CSL}	
\end{figure}

With reference to Figure \ref{fig2CSL}, the first order scheme with conservative correction works as follows:
\begin{enumerate}
	\item using (\ref{numSol dm}), predict $f_{i,j}^{(1)}$  from $\{f_{i,j}^n\}$ at time $t^{n+1}$;\\
	\item reconstruct $\widehat{F}_{i+\frac{1}{2},j}^{(1)}$ and $\widehat{F}_{i-\frac{1}{2},j}^{(1)}$ from $\{v_j f_{i,j}^{(1)}\}$, by using a suitable high order reconstruction (see Sect.\ \ref{Recon}); \\
	\item compute the convective term ${f^*}_{i,j}^{n+1}$ by the conservative scheme
	\[
	{f^*}_{i,j}^{n+1}
	= f_{i,j}^n -\frac{\Delta{t}}{\Delta{x}}
	(\widehat{F}_{i+\frac{1}{2},j}^{(1)}- \widehat{F}_{i-\frac{1}{2},j}^{(1)});
	\]
	\item compute the discrete Maxwellian	${d\mathcal{M}^*}_{i,j}^{n+1}$  from ${f^*}_{i,j}^{n+1}$;\\
	\item update the solution  $f_{i,j}^{n+1}$ using
	\begin{equation}\label{Conservative Stable DM}
	f_{i,j}^{n+1}={f^*}_{i,j}^{n+1}+\frac{\Delta{t}}{\kappa}({d\mathcal{M}^*}_{i,j}^{n+1}-f_{i,j}^{n+1})
	\end{equation}
\end{enumerate}

Here 
$\widehat{F}$ is an accurate reconstruction of the flux  $v f$ in the sense of conservative finite difference \cite{Shu98}. 
We only present the formulation in 1D. Extension to more dimensions can be obtained performing  a dimension by dimension 1D reconstruction of the fluxes. 

\begin{remark}\label{APprop}
	The conservative correction imposes severe stability restriction
	on the CFL number for the C-SL schemes (for a theoretical investigation see \cite{RQT}). An accurate analysis for high order Runge-Kutta or BDF C-SL schemes will be given in Sect.5.
\end{remark}

\subsection{Spatial discretization}\label{Recon}
We restrict ourselves to 1D case and adopt a uniform grid $\Delta x:= x_{i+1}- x_i$.

\subsubsection*{Flux computation at the foot of the characteristics}
We use the {Generalized} WENO reconstruction (G-WENO) introduced in \cite{CFR} for non-oscillatory high-order reconstruction of $\tilde{f}^n_{ij} $. The main advantage of such a reconstruction is its use of polynomial weights, which provide a general framework to implement WENO interpolation on any points in a cell. See Appendix \ref{G-Weno} for details.

In our C-SL scheme, we need an accurate approximation of the convection term: $v\partial_xf$. For this, we set $F(f):=vf$,
and look for a function $\widehat{F}$ such that
\begin{align}\label{cell average}
F_i = \frac{1}{\Delta x} \int_{x_{i-\frac{1}{2}}}^{x_{i+\frac{1}{2}}} \widehat{F}dx.
\end{align}
where $F_i = F(f(x_i,v,t))$. Then we can compute the convection term using the following relation:
\begin{align}\label{Flux}
\partial_x F_i = \frac{1}{\Delta x} \left(\widehat{F}(x_{i+\frac{1}{2}}) -  \widehat{F}(x_{i-\frac{1}{2}})\right).
\end{align}
To compute $\widehat{F}(x_{i\pm\frac{1}{2}})$, we use the classical WENO reconstruction in \cite{Shu98} to guarantee non-oscillatory high-order approximation of $\widehat{F}_{i\pm\frac{1}{2}}$. In this reconstruction, we actually find a piecewise polynomial function that interpolates $\{F_i\}_{i=1 \dots N_x}$. Since those polynomials contain discontinuity at cell boundaries $x_{i\pm \frac{1}{2}}$, it is necessary to pick the correct direction where information comes from. For this reason, upwinding is introduced by flux splitting:
$$F=F^+ + F^-$$
where
\begin{align*}
F^+(f) =
\begin{cases}
vf, \quad v>0\\
0, \quad \text{otherwise}	
\end{cases}
,\quad
F^-(f) =
\begin{cases}
0, \quad v> 0\\
vf, \quad \text{otherwise}	
\end{cases},
\end{align*}
so that (\ref{cell average}) can be rewritten as $F_i=F_i^+ + F_i^-$, where 
\[
F^\pm(x) = \frac{1}{\Delta x} \int_{x-\Delta x/2}^{x+\Delta x/2}  \widehat{F}^\pm(\xi)\,d\xi.
\]
The half fluxes $\widehat{F}^\pm(x)$ are obtained by piecewise polynomial reconstruction:
\[
\widehat{F}^\pm(x) = \sum_i\chi_i(x)\widehat{F}^\pm_i(x)
\]
where $\chi_i(x)$ denotes the characteristic function of interval $[x_{i-1/2},x_{i+1/2}]$.

Then, by standard WENO process \cite{Shu98}, we reconstruct $\widehat{F}_i^\pm(x)$  from $\{F_i^\pm\}$. Finally, our numerical flux is obtained as follows: 
\[
\widehat{F}_{i+\frac{1}{2}}=\widehat{F}_{i}^+(x_{i+1/2}) + \widehat{F}_{i+1}^-(x_{i+1/2}).\]

\subsection{Time discretization}
High order discretization in time can be obtained by Runge-Kutta methods (RK) or backward differentiation formulas (BDF) \cite{HWN}.
For the sake of simplicity, we again consider the one-dimensional problem in space and velocity with uniform grid in time.
\subsubsection*{Runge-Kutta methods}


Our system (\ref{characteristic}) becomes \emph{stiff} as
$\kappa \rightarrow \infty$. To overcome this difficulty, we need stable schemes. In view of this, \emph{L-stable diagonally implicit Runge-Kutta} (DIRK) methods
provide a balanced
performance between stability and efficiency \cite{HW}.

DIRK methods can be represented using the Butcher's table
\begin{align}\label{Butcher}
\begin{array}{c|c}
c & A\\
\hline \\[-3mm]
& b^T
\end{array}
\end{align}
where $A=(a_{kl})$ is a $s\times s$ lower triangle matrix and $c= (c_1,...,c_s)^T$ and
$b=(b_1,...,b_{s})^T$ are coefficients vectors \cite{HWN}.

In order to guarantee $L$-stability, here we make use of {\em stiffly accurate} schemes (SA), i.e.\ schemes for 
which the last row of matrix $A$ is equal to the vector of weights: $a_{s,j} = b_j, j=1,\ldots,s$. This will ensure that 
the absolute stability function vanishes at infinity. As a consequence, an $A$ -stable scheme which is SA is also $L$-stable 
\cite{HW}.




\begin{figure}[t]
	\centering
	\includegraphics[width=0.6\linewidth]{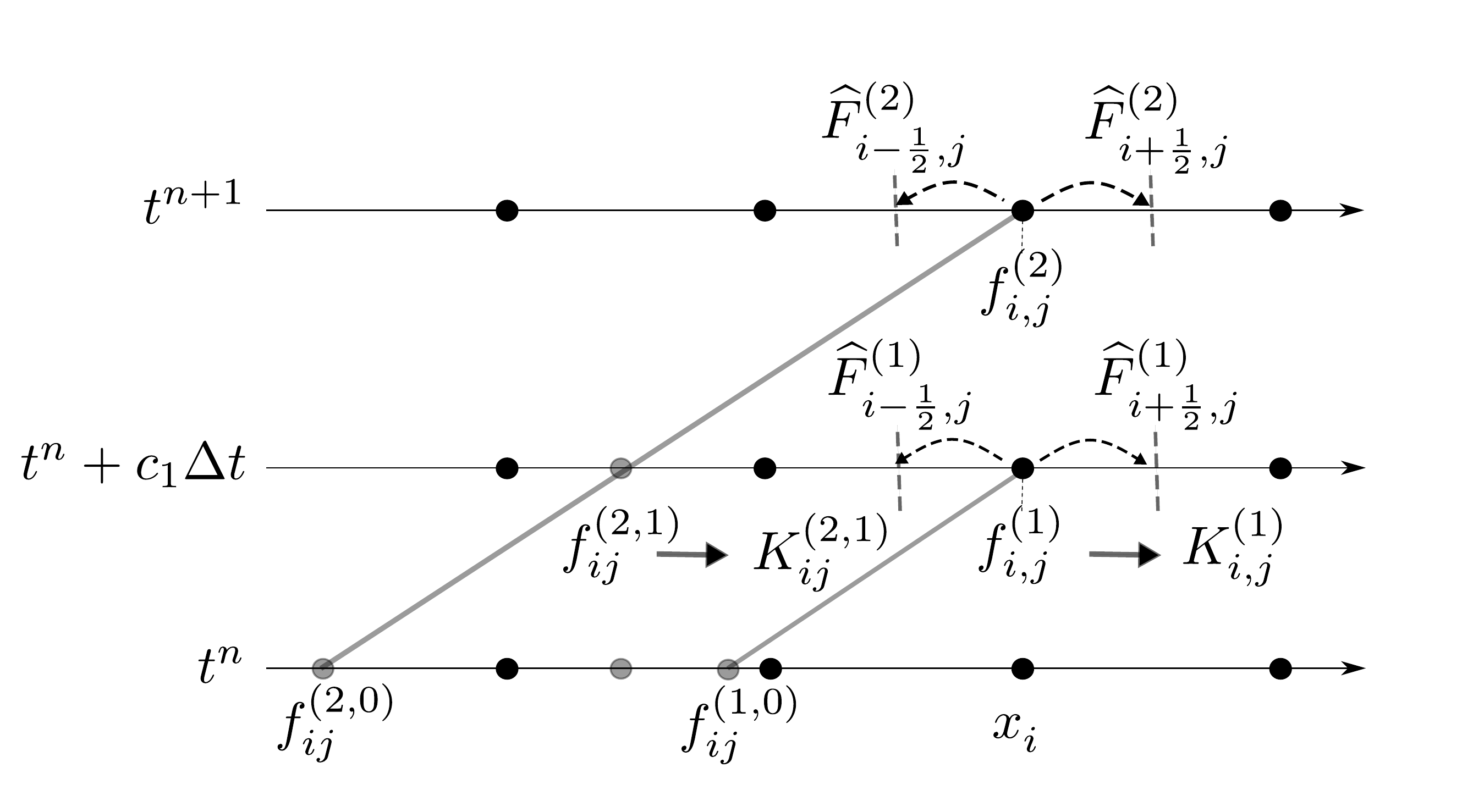}			\caption{Representation of DIRK2 scheme with conservative correction.}\label{DIRK2}	
\end{figure}

Now, we illustrate our L-stable DIRK schemes to approximate the characteristic system \eqref{characteristic} coupled with the conservative correction and the discrete Maxwellian.
In the following,  $f_{ij}^{(k,\ell)}$, $\ell = 0, ..., s,$ denotes the $\ell$-th stage value computed along the $k$-th characteristic corresponding to each $x_i$ and $v_j$, see Figures \ref{DIRK2}. For example, in the case of $\ell = 0$, $f_{ij}^{(k,0)}$ is the approximation of  $f(x_i-c_k\Delta t v_j ,v_j, t^n)$ reconstructed from $\{ f^n_{i,j}\}$.
We also define the RK flux $K_{ij}^{(k,\ell)} $ by
\begin{align*}
K_{ij}^{(k,\ell)} &= \frac{1}{\kappa}(d\mathcal{M}_{ij}^{(k,\ell)}-f_{ij}^{(k,\ell)}), \quad \ell = 1, ...,s.
\end{align*}


\subsubsection{Algorithm DIRK}\label{A_DIRK}

For $k=1,\dots,s$.

\begin{itemize}
	\item \emph{Non-conservative step}
	\begin{enumerate}
		\item Compute $f_{ij}^{(k,0)}$ in $x_{ij}^{(k, 0)}:= x_{i}-c_k v_j\Delta t$ along the $k$-th characteristic by interpolation from $\{f_{i,j}^{n}\}$.
		
		\item 
		Compute:
		\[
		f^{(k)}_{i,j} = f_{ij}^{(k,0)} + \Delta t \sum_{\ell = 1}^{k -1} a_{k \ell} K_{ij}^{(k, \ell)} + \frac{\Delta t}{\kappa}a_{kk}\left( d\mathcal{M}^{(k)}_{i,j} - f_{i,j}^{(k)}\right)
		\]
		where $d\mathcal{M}^{(k)}_{i,j}$ is computed imposing, within some tolerance, that
		\begin{align}\label{dMRK}
		\sum_{j} \phi_j \,d\mathcal{M}_{i,j}^{(k)} \Delta v= \sum_{j}\phi_j \,(f^{(k, 0)}_{ij} + \Delta t \sum_{\ell = 1}^{k -1} a_{k \ell} K_{ij}^{(k, \ell)})\Delta v, \quad \phi_j = 1, v_j, v_j^2/2.
		\end{align}
		
		\item Compute:
		\begin{align*}
		K_{i,j}^{(k)} &= \frac{1}{\kappa}\left(d\mathcal{M}_{i,j}^{(k)}-f_{i,j}^{(k)}\right).
		\end{align*}
		
		\item Compute the RK flux $K_{ij}^{(\ell,k)}$ in $x_{ij}^{(\ell, k)}:= x_{i}-(c_\ell-c_k)v_j\Delta t$ with $\ell = k+1, \cdots, s$ along the $\ell$-th charactheristc by interpolation from $\{K_{i,j}^{(k)}\}$.
		\item Reconstruct $\widehat{F}^{(k)}_{i+1/2,j}$ from $\{ v_j f_{i,j}^{(k)}\}$ using G-WENO reconstruction \cite{Shu98}  within a \emph{finite difference formulation} (fd).
		
	\end{enumerate}
	
	\item	\emph{Conservative correction step}
	\begin{enumerate}
		\item Compute the conservative convection: $$f^{*}_{i,j} = f_{i,j}^{n} + \frac{\Delta t}{\Delta x} \sum_{\ell = 1}^{s} b_{\ell} \left(\widehat{F}^{(\ell)}_{i+1/2,j} - \widehat{F}^{(\ell)}_{i-1/2,j} \right).$$
		\item Compute conservative solution:
		$$
		f^{n+1}_{i,j} = f^{*}_{i,j} + \Delta t \sum_{\ell = 1}^{s -1} b_{\ell} K_{i,j}^{(\ell)} + \frac{\Delta t}{\kappa}b_{s}\left( d\mathcal{M}^{(*)}_{i,j} - f_{i,j}^{n+1}\right)
		$$
		where ${d\mathcal{M}}_{i,j}^{(*)}$ is computed imposing, within some tolerance, that
		\begin{align}
		\sum_{j} \phi_j \,{d\mathcal{M}}_{i,j}^{(*)} \Delta v = \sum_{j}\phi_j \,f^{*}_{i,j}\Delta v, \quad \phi_j = 1, v_j, v_j^2/2.
		\end{align}
		
	\end{enumerate}
\end{itemize}
In the previous expression, the terms containing $K_{i,j}^{(\ell)}$ vanish because the intermediate Maxwellians have the same moments of the stage values. 
%

\subsubsection*{BDF methods}
Another time discretization we use for the stable approximation of stiff problems (\ref{characteristic}) is  the backward differentiation formula (BDF) (see \cite{HW}) whose general form is given by
\[
BDF:y^{n+1}= \sum_{k=1}^{s}a_{k}\,y^{n+1-k} + \beta_s \,\Delta t \, g(y^{n+1},t_{n+1})\\
\]
with $\beta_s \neq 0$.
For our work, we use BDF2 and BDF3:
\begin{align}\label{BDFschemes}
\begin{array}{l}
\displaystyle	BDF2: y^{n+1}= \frac{4}{3}y^n - \frac{1}{3}y^{n-1} + \frac{2}{3} \Delta t \, g(y^{n+1},t_{n+1}),\\[3mm]
BDF3:
\displaystyle	y^{n+1}= \frac{18}{11}y^n - \frac{9}{11}y^{n-1} + \frac{2}{11}y^{n-2} + \frac{6}{11} \, \Delta t \, g(y^{n+1},t_{n+1}).
\end{array}
\end{align}	

%
BDF schemes have some advantages over DIRK since a smaller number of numerical determination
of the discrete Maxwellian and fluxes are needed and fewer interpolations are required. For BDF2 and BDF3, there is only one stage in which we have to compute the discrete Maxwellian and fluxes while two and three stages are required for DIRK2 and DIRK3 schemes respectively. 
Moreover, BDF2 and BDF3 schemes require two and three steps for interpolations whereas DIRK2 and DIRK3 schemes require three and six steps respectively. The price to pay is that BDF has more severe stability restriction than  DIRK (See Section 5).

\subsubsection{Algorithm BDF} Let $a_{k}$, and $\beta_s$ be the coefficients of a BDF method of order $s$. Given a discrete approximation $\{f^n_{ij}\}$ of the distribution function at time $t_n$, $\{f^{n+1}_{ij}\}$ is computed by the following steps
\begin{itemize}	
	\item \emph{Non-conservative step}.
	\begin{enumerate}
		\item For $k=1,\dots,s$, interpolate $f_{ij}^{n,k}= f(x_i-k v_j \Delta{t}, v_j, t^{n+1-k})$ in $x_{ij}^{k}:= x_{i}-k v_j\Delta t$ from $\{f_{i,j}^{n+1-k}\}$ with a suitable \emph{generalized} WENO reconstruction in \cite{CFR}.
		\item Compute $f_{i,j}^{*}=\sum_{k = 1}^{s} a_{k}f_{ij}^{n,k}$ and
		$$
		f^{(1)}_{i,j} = f^{*}_{i,j}  + \beta_s\frac{\Delta t}{\kappa}\left( {d\mathcal{M}}_{i,j}^{(1)} - f_{i,j}^{(1)}\right)
		$$
		
		where		${d\mathcal{M}}_{i,j}^{(1)}$ is computed imposing, within some tolerance, that
		\begin{align}\label{dM}
		\sum_{j} \phi_j \,d\mathcal{M}_{i,j}^{(1)} \Delta v= \sum_{j}\phi_j \,f^{*}_{i,j}\Delta v, \quad \phi_j = 1, v_j, v_j^2/2.
		\end{align}
	\end{enumerate}	
	\item	\emph{Conservative correction step}
	\begin{enumerate}
		\item Reconstruct $\widehat{F}^{(1)}_{i+1/2,j}$ from $\{ v_j f_{i,j}^{(1)}\}$ using WENO reconstruction  in the framework of the \emph{conservative finite difference formulation} (fd) \cite{Shu98}.
		\item Conservative convection: $f^{**}_{i,j} = \sum_{k = 1}^{s} a_k f_{i,j}^{n+1-k} -\beta_s \frac{\Delta t}{\Delta x}\left(\widehat{F}^{(1)}_{i+1/2,j} - \widehat{F}^{(1)}_{i-1/2,j} \right)$.
		\item Compute conservative solution: $$f^{n+1}_{i,j} = f^{**}_{i,j} + \beta_s \frac{\Delta t}{\kappa}\left( {d\mathcal{M}}_{i,j}^{n+1} - f_{i,j}^{n+1}\right).$$
		Note that ${d\mathcal{M}}_{i,j}^{n+1} = {d\mathcal{M}}(f_{i,j}^{**})$ as in (\ref{dM}).
	\end{enumerate}
\end{itemize}
\begin{figure}[t]
	\centering
	\includegraphics[width=0.5\linewidth]{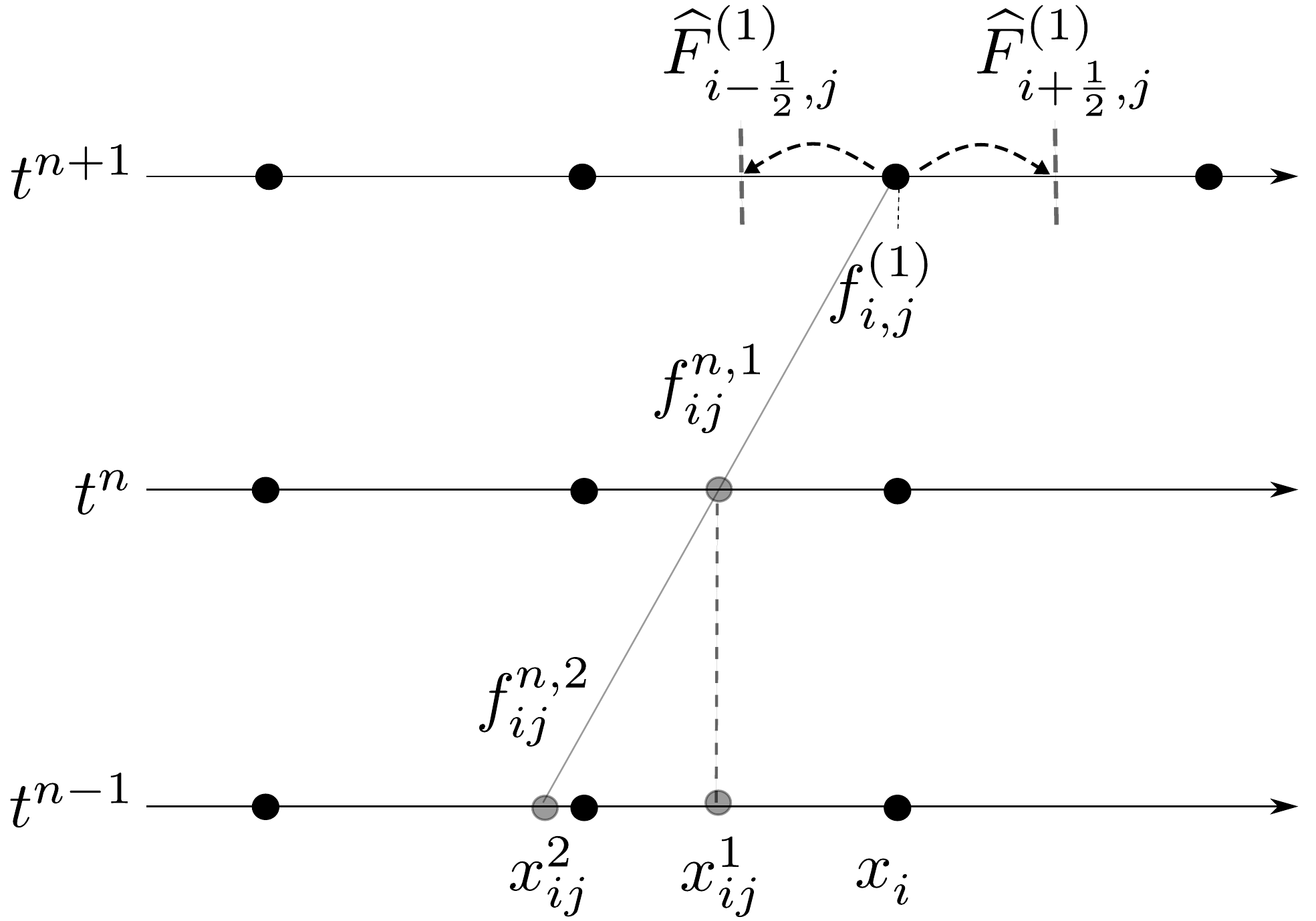}			\caption{Representation of BDF2 scheme with conservative correction. Black circles: grid nodes, grey circles: points where interpolation is needed.}	
	\label{Figbdf2}
\end{figure}
%

Schematics of BDF2 is illustrated in Fig. \ref{Figbdf2}.


The conservative correction imposes some stability restrictions on the time step. The next section is devoted to the stability analysis of RK and BDF schemes applied to the linear advection equation. 

\section{Linear stability analysis}

In this section we perform the stability analysis of conservative semi-Lagrangian
scheme for the 1D advection equation. Following  \cite{RQT} we consider the linear transport
equation
\begin{equation}\label{Equ}
u_t + v u_x = 0, \quad u(x,0) = u_0(x), \quad v \in \mathbb{R}.
\end{equation}
For simplicity, we assume a periodic boundary condition and $x \in [-\pi,\pi]$.

Algorithm \ref{A_DIRK} applied to (\ref{Equ}) gives
\begin{equation}\label{EquFq}
u_j^{n+1} = u_j^{n} -\frac{\Delta t}{\Delta x}\sum_{\ell = 1}^{s}b_{\ell} v \left( \widehat{u}^{(\ell)}_{j+1/2} - \widehat{u}^{(\ell)}_{j-1/2}\right),
\end{equation}
where $\widehat{u}^{(\ell)}_{j+1/2}$ are obtained from the stage values $u^{(\ell)}_j = u^n(x_j -v c_\ell \Delta t)$ by reconstruction, and $u^n(x)$ denotes a suitable interpolation from $\{u^{n}_j\}$. 

\subsection{Fourier interpolation}

We look for the evolution of a Fourier mode of the form
\[
u^n_j = \rho^n e^{ikj\Delta x} =\rho^n e^{ij\xi}, \quad \xi = k \Delta x, \quad i= 		\sqrt{-1}.
\]
In the analysis we first consider Fourier interpolation, so
\begin{equation}
u^n(x)=\rho^ne^{ikx} = \rho^ne^{i\xi x/\Delta x}, \quad \xi \in [-\pi, \pi], 
\label{eq:FI}
\end{equation}
where $\rho^n = \rho^n(\xi)$ is the amplification factor associated with $\xi$. Plugging such {\em ansatz} into the stage values, we get
\[
u^{(\ell)}_j =  \rho^n \exp{(i\xi(x_j -v\Delta t c_{\ell})}/\Delta x) = \rho^n e^{ij\xi}e^{-ic_\ell a \xi},
\]
where $a = v\Delta t/\Delta x$ denotes the CFL number.  
In \cite{Shu98}, the relation between $u(x)$ and $\hat{u}$ is given by
\[
\frac{\hat{u}(x+\Delta x/2)-\hat{u}(x-\Delta x/2)}{\Delta x} 
= \frac{\partial u}{\partial x}(x).
\]
Using this relation and (\ref{eq:FI}) one has
\begin{equation}\label{uhat}
\displaystyle\hat{u}^n(x) = \frac{u^n(x)}{\sinc(\xi/2)},
\end{equation}
where $\sinc(x) = \sin(x)/x$.

Making use (\ref{uhat}) and (\ref{eq:FI}) in Eq.(\ref{EquFq}), one obtains the following formula for the amplification factor:

%
\begin{equation}\label{rhoXi}
\rho(\xi) = 1 -i \xi a \sum_{\ell = 1}^s b_{\ell} \exp{(-i c_{\ell} a \xi)}.
\end{equation}
The scheme is stable if $|\rho(\xi)|\le 1$ for all $\xi \in [-\pi, \pi]$.

Such stability problem is closely related to the linear stability of the quadrature formula when applying to the approximation  of the
integral form of a scalar linear ODE,
\[
y' = \lambda y, \quad y(0) = 1, \quad \forall \lambda \in \mathbb{C}.
\]
In fact, the solution after one step of this ODE is: $y(\Delta t) = e^{\lambda \Delta t} = e^{z}$, where $z := \Delta t \lambda$. Such solution is stable iff
$\mathcal{R}(z) \le 0$, i.e. if $\mathcal{R}(\lambda) \le 0$. Considering the following identity
\[
e^{z} = 1 + z\int_{0}^1 e^{cz} dc
\]
and approximating the integral by a quadrature formula with nodes $c_{\ell}$ and weights $b_{\ell}$,
one obtains the approximation of the exact solution after one step:
\begin{equation}\label{StabR}
R(z) = 1 + z \sum_{\ell = 1}^s b_{\ell} e^{c_{\ell}z}
\end{equation}
with which the stability region can be drawn by the set
$\{z \in \mathbb{C} : |R(z)|\le 1\}$. Comparing equations (\ref{rhoXi}) with (\ref{StabR}), ones has $\rho(\xi) = R(-ia\xi)$ with $\xi \in [\pi, \pi]$. Thus the stability of a quadrature formula in a conservative semi-Lagrangian scheme for a linear advection equation is closely related to the stability on the imaginary axis.  Then in order to guarantee stability we look of the largest interval $I^* = [-y^*, y^*]$ of the imaginary axis such that $|R(iy)| \le 1$ $\forall y \in I^*$. Note that the bound $a^* = y^*/\pi$ quantifies the maximum CFL number for the semi-Lagrangian scheme that guarantees stability.

Now in order to maximize the stability interval on imaginary axis, we construct quadrature formulas that allow a wider stability region. Let us consider the expression $R(iy)$ and write it in the form
\begin{equation}\label{CsSs}
R(iy) = 1 + iy(C_s(y) + i S_s(y)) = 1 -yS_s(y) + iyC_s(y)
\end{equation}
where
\[
C_s(y) = \sum_{\ell = 1}^s b_{\ell}\cos(c_{\ell}y), \quad S_s(y) = \sum_{\ell = 1}^s b_{\ell}\sin(c_{\ell}y).
\]
The stability condition therefore bocomes
\[
|R(iy)|^2 = 1-2yS_y(y) + y^2(C^2_s(y) + S^2_s(y) )\le 1.
\]
Such condition can be written in the form
\begin{equation}\label{Fsy}
yF_s(y) \ge 0, \, \textrm{where} \quad F_s(y) := S_y(y) -\frac{1}{2}\left( C^2_s(y) + S^2_s(y)\right).
\end{equation}
Then the problem to find quadrature formulas with the widest stability region is connected to determine the coefficients ${\bf b} = (b_1, ..., b_s)$ and ${\bf c} = (c_1, ..., c_s)$ so that the interval in which (\ref{Fsy}) is satisfied is the widest. The analysis of quadrature formulas with even $s$ and symmetric distribution of nodes around the middle of the interval is performed in \cite{RQT}.

Here we numerically compute nodes and weights
for a particular class of third order DIRK schemes that satisfy the simplification conditions
\[
\sum_{j=1}^s a_{ij} = c_i,\quad i=1,\ldots,s
\]
and for which the last row of the $A$-matrix coincides with the weights, 
$a_{s,j} = b_j, \> j=1,\ldots,s$. This constraint is imposed in order to have $L$-stable schemes, in view of the AP property in the fluid dynamic regime.
Such schemes have the following structure:
\begin{equation}
\begin{array}{ c | c  c  c}
c_1 & c_1 & 0 & 0 \\
c_2 & c_2-\gamma_2 & \gamma_2 & 0 \\
1 & b_1 & b_2 & b_3 \\ \hline
& b_1 & b_2 & b_3 
\end{array}
\label{DIRK3g}
\end{equation}
The coefficients of the scheme are determined taking into account the following requirements:
\begin{itemize}
	\item the scheme has to be at least third order accurate;
	\item the scheme has to be $A$-stable (and therefore L-stable, because it is  Stiffly Accurate, (SA) i.e.  $a_{si} = b_i$ for $i = 1,2,3$, see \cite{HW});
	\item nodes and weight are selected in such a way that condition (\ref{Fsy}) is satisfied for a wide region.
\end{itemize}

Order conditions for scheme (\ref{DIRK3g}), up to third order accuracy, are:
\begin{equation}\label{Third_cond}
\sum_{i = 1}^s b_i = 1,\quad \sum_{i = 1}^s b_ic_i = 1/2, \quad \sum_{i = 1}^s b_ic^2_i = 1/3, \quad \sum_{i,j = 1}^s b_ia_{ij}c_j = 1/6.
\end{equation}

Solving these equations allows to express four parameters of the scheme as a function of $c_1$ and $c_2$:
\begin{equation}\label{Cond3}
b_2 = \frac{1}{6}\frac{3c_1-1}{(c_2-c_1)(c_2-1)}, \quad b_3 = \frac{1}{6}
\frac{6c_1c_2-3c_1-3c_2+2}{(c_2-1)(c_1-1)}, \quad
\gamma_2 = \frac12\frac{6c_1^2c_2-4c_1c_2-c_1+c_2}{(3c_1-1)(c_1-1)}
\end{equation}
and $b_1 = 1-b_2-b_3$.
This leaves two free parameters, which are chosen according to the two additional conditions. 

In order to impose $A$-stability,  from \cite{HW}, we recall the following result. 

An implicit R-K method is $A$-stable iff 
\begin{enumerate}
	\item the stability function $R(z) = P(z)/Q(z)$ is analytic in $\mathbb{C}$ for ${Re}(z)<0$;
	\item the method is $I$-stable, i.e. $|R(iy)|\le 1$ for all $y \in \mathbb{R}$ (stability on the imaginary axis).
\end{enumerate}
The $I$-stability is equivalent to the fact that the polynomial 
\begin{equation}\label{Ey}
E(y)= |Q(iy)|^2 - |P(iy)|^2 = \sum_{j = 0}^sE_{2j}y^{2j}
\end{equation} 
satisfies $E(y)\ge 0$ for all $y \in \mathbb{R}$ and $i = \sqrt{-1}$. 

Performing a detailed calculation (reported in Appendix \ref{StabilityCalculus}), the condition for $I$-stability (\ref{Ey}) and (\ref{Fsy}) becomes: either 
\begin{equation}
c_1<1/3, \> c_2>1
\label{eq:c1}
\end{equation}
or 
\begin{equation}
c_1>1/3, \> c_2<1.
\label{eq:c2} 
\end{equation}
The latter has to be excluded since it implies $b_3<0$ and condition (1) for the analyticity of the function $R(z)$ above is not satisfied. 

Then DIRK scheme (\ref{DIRK3g}) is $A$-stable and by the SA property, it's also $L$-stable.



%

{\bf Remark} If we look for a third order Singly DIRK scheme i.e.\ with $\gamma_1 = \gamma_2 = \gamma_3 = \gamma$,
as in \cite{AUSR}, then there are no free parameters, and one obtains
$\gamma \simeq 0.4358665215$ and $\delta = \frac{3}{2}\gamma^2 - 5\gamma +\frac{5}{4} \simeq 644363171$. 
This scheme is $A$-stable and $L$-stable,
but the weights and the nodes do not satisfy condition (\ref{Fsy}) for any $y >0$, i.e. scheme (\ref{EquFq}) is not stable.

This remark suggests to look for DIRK methods as (\ref{DIRK3g}) such that $\gamma_1 = \gamma_3 = \gamma$, i.e.
\begin{equation}\label{DIRK3bis}
\begin{array}{ c | c  c  c}
\gamma & \gamma & 0 & 0 \\
c_2 & c_2-\gamma_2 & \gamma_2 & 0 \\
1 & 1-b_2-\gamma & b_2 & \gamma \\ \hline
& 1-b_2 - \gamma & b_2 & \gamma
\end{array}
\end{equation}
From (\ref{Third_cond}), we have four equations with five unknowns $b_2$, $c_2$ $\gamma_2$, $\gamma$ and $b_1$ and from (\ref{Cond3}), with $c_1 = b_3 = \gamma$, we compute $b_2$, $c_2$ and $\gamma_2$ as functions of $\gamma$ and $b_1 = 1- b_2 -\gamma$.  

Performing a detailed calculation (reported in Appendix \ref{StabilityCalculus}),
we require to choose $\gamma$ in the following intervals  
\begin{equation}\label{intervals}
] 1-\sqrt{2}/2, 1/3[, \quad ] 1+\sqrt{2}/2, + \infty[.
\end{equation}
Note that the second interval can not be accepted because this implies values  of $\gamma$ such that $\gamma > 1 + \sqrt{2}/2 \approx 1.70710...$, and this is in contradiction with the hypothesis (\ref{eq:c1}).


A numerical experiment shows that the optimal value of $\gamma$  in the first interval in order to have stability, is approximately $\gamma = 0.3$, (see Fig. \ref{amp w.r.t gamma}). Then for this choice of $\gamma$, the coefficients of scheme (\ref{DIRK3bis}) are: $\gamma=0.3$, $\gamma_2={13}/{3}$, $b_2=-{3}/{710}$ and $c_2= {8}/{3}$.
This scheme is stable under the condition (\ref{Fsy}) for $y \le y^* = 4.715426442$ with $a^* \approx 1.5$ and is also $L$-stable.
\begin{figure}[h!]
	\centering
	\includegraphics[width=0.46\linewidth]{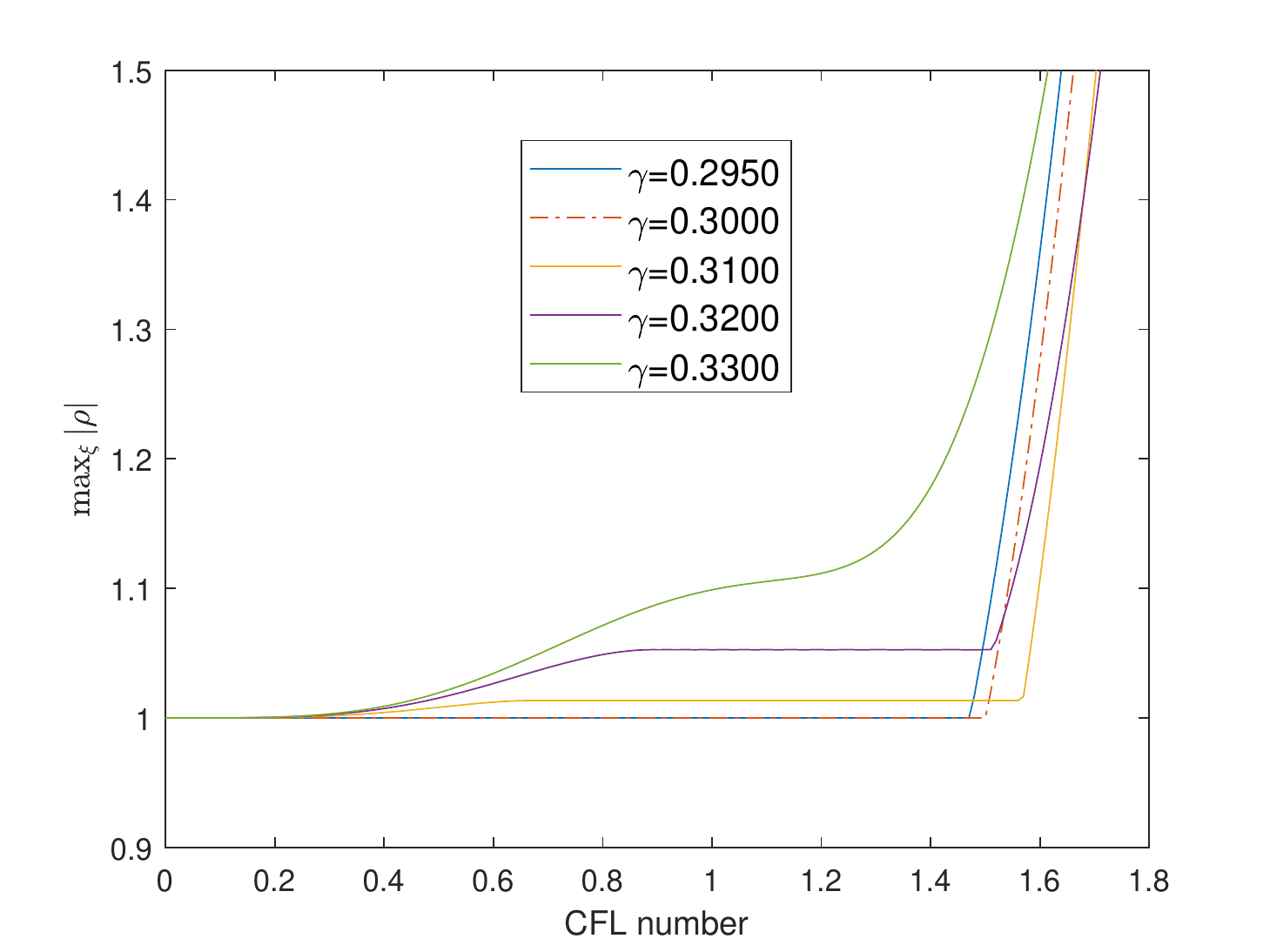}
	\caption{Optimal choice of $\gamma$ for scheme (\ref{DIRK3bis}).
	}
	\label{amp w.r.t gamma}
\end{figure}



For the numerical experiments, we use the following two types of DIRK methods. The first is a second order DIRK scheme (DIRK2) \cite{AUSR}
\begin{center}
	DIRK2 =
	\begin{tabular}{ c | c  c }
		$\alpha$ & $\alpha$ & 0 \\
		1 & 1-$\alpha$ & $\alpha$ \\ \hline
		& 1-$\alpha$ & $\alpha$ \\
	\end{tabular}
\end{center}
where $\alpha = 1- \frac{\sqrt{2}}{2}$.
This scheme is stable under the condition (\ref{Fsy}) for $y \le y^* = 4.586275880$ with $a^* \approx 1.46$ and is also $L$-stable.  
The second one is the third order DIRK scheme (DIRK3)  (\ref{DIRK3bis}).

Now apply BDF schemes to system (\ref{Equ}) with $f = vu$, and we get
\begin{equation}\label{BDFadv}
u^{n+1}_{j} = \sum_{\ell = 1}^{k} {a}_{\ell} u^{n-\ell +1}_{j} - \beta_k v\frac{\Delta t}{\Delta x} \left( \widehat{u}^{n+1}_{j+1/2} - \widehat{u}^{n+1}_{j-1/2}\right),
\end{equation}
and by (\ref{uhat})
\begin{equation}\label{uhat2}
\widehat{u}^{n+1}_{j+1/2} =\frac{ \tilde{u}^{n+1}(x_{j + 1/2})}{\sinc(\xi/2)},
\end{equation}
with 
\begin{equation}\label{uhat3}
\tilde{u}^{n+1}(x_j) = \sum_{\ell = 1}^{k} {a}_\ell u^n(x_j - v\ell\Delta t).
\end{equation}

Then we look for the evolution of the Fourier mode identified by the parameter $\xi \in [-\pi, \pi]$. We set $u^(x) = \rho^n e^{ikx} = \rho^n e^{i\xi x/\Delta x}$ so that $u^n(x_j) = u^{n}_{j} = \rho^n e^{ij\xi}$. Then (\ref{uhat3}) becomes 
$$
\tilde{u}^{n+1}(x_j) = \sum_{\ell = 1}^k {a}_\ell\rho^{n - \ell + 1} e^{i \xi (x_j - \ell v\Delta t)/\Delta x},
$$ 
and (\ref{uhat2}) becomes 
$$
\hat{u}^{n+1}(x_{j + 1/2}) = \left(\sum_{\ell = 1}^k a_\ell\rho^{n-\ell +1}e^{ij\xi}e^{-i\xi a}e^{i\xi/2}\right)/\sinc(\xi/2),
$$
After some algebraic manipulation, we obtain for (\ref{BDFadv}):
\begin{equation}\label{BDFstab}
\rho^{n+1} = \sum_{\ell = 1}^{k}{a}_{\ell} \rho^{n-\ell +1}\left( 1- \beta_k a e^{-i\xi \ell a} i\xi \right)
\end{equation}
where $a = v \Delta t/\Delta x$. Then the characteristic polynomial associated to (\ref{BDFstab}) is:
\begin{equation}
p(\rho) = \rho^{k} - \sum_{\ell = 1}^{k}{a}_{\ell} \rho^{k-\ell}\left( 1- \beta_k a e^{-i\xi \ell a}  i\xi \right).
\end{equation}
Now again we compute the maximum $a^*$ such that
\begin{equation}\label{astar}
\max_{\xi \in [-\pi, \pi]}|\rho(a,\xi)|\le 1 ,\quad  \forall a \in [0,a^*].
\end{equation}
Here $\rho(a,\xi)$ represents the largest root in absolute value of the polynomial $p(\rho)$. In particular, we consider the two BDF schemes BDF2 and BDF3 with $k = 2$ and $k = 3$, respectively. We compute numerically (\ref{astar}) and we get for BDF2  $a^* \approx 0.5678$, while BDF3  is unstable for each $a > 0$. 

This analysis confirms that the conservative correction imposes stability restriction on the CFL number $a^*$ for the BDF methods.

We conclude that C-SL schemes based on RK framework have better stability properties that those based on BDF when applied to linear advection equation (\ref{Equ}).

Note that in our numerical tests in practise some schemes can be more stable when applied to BGK equation because the collision term has a  stabilizing effect.

\begin{figure}[t]
	\centering
	\begin{subfigure}[b]{0.48\linewidth}
		\includegraphics[width=1\linewidth]{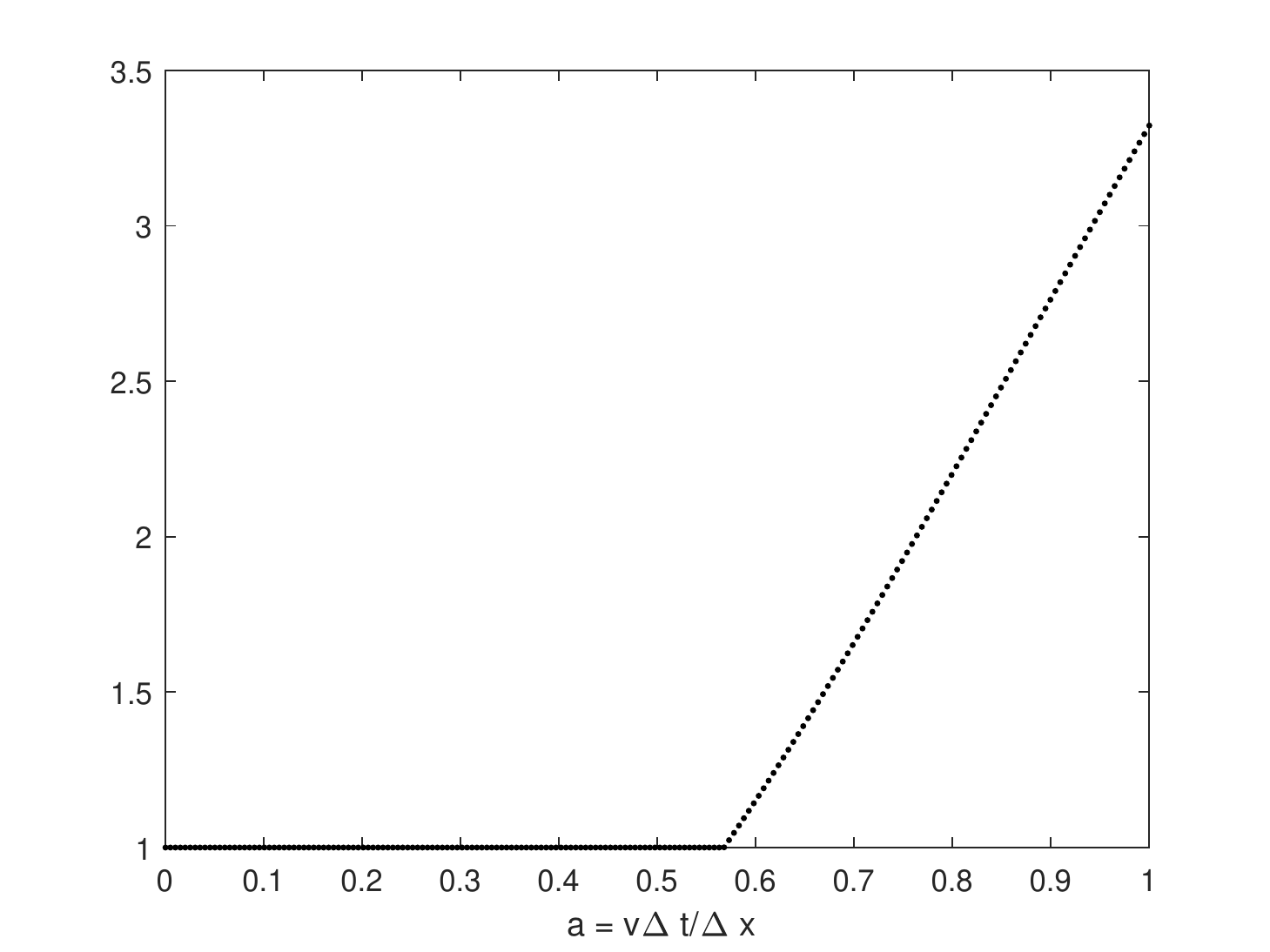}
		\subcaption{$k=2$}
	\end{subfigure}
	\caption{Optimal $a^*$ for BDF2}\label{amp BDF}
\end{figure}

\section{Numerical experiments}
In this section, we propose four tests to verify some properties of proposed schemes.

In test 1, we compute the conservation error of the schemes. In test 2, we check the correct order of accuracy for smooth solutions for various values of the Knudsen number $\kappa$. In test 2, we check the Asymptotic Preserving(AP) property. We end this section with test 3, in which we check shock capturing capability for the Euler limit. We only consider one-dimensional problems. For the time step, we use $\Delta{t}=CFL\times\Delta{x} /|v_{max}|$. For space and velocity grids, we discretize $\Delta{v}:=(v_{max}-v_{min})/N_v$ and $\Delta{x}:=(x_{max}-x_{min})/N_x$. To distinguish proposed conservative schemes from non-conservative schemes, we denote each scheme as follows:

\begin{center}
	\begin{tabular}{|l||c|c|c|l|} \hline
		Scheme name & Conservative & ODE solver & Reconstruction & Maxwellian \\ \hline
		RK2-W23-DM  & YES          & DIRK2      & WENO 2-3       & Discrete   \\
		RK3-W35-DM  & YES          & DIRK3      & WENO 3-5       & Discrete   \\
		RK2-W23     & NO           & DIRK2      & WENO 2-3       & Continuous \\
		RK3-W35     & NO           & DIRK3      & WENO 3-5       & Continuous \\ \hline
	\end{tabular}
\end{center}

A similar notation is used for the schemes based on BDF time integrator.


\subsection{Test 1}
\label{test1}
We consider the same single shock test adopted in Section \ref{sec:test1}, and apply the various schemes based on the conservative correction. The results are summarized in Tables 
\ref{tab3} and \ref{tab4}. 


\begin{table}[h]
	\centering	
	{\begin{tabular}{|cccc|ccc|}
			\hline
			\multicolumn{1}{ |c| }{} & \multicolumn{3}{ c| }{RK3-W35-CM} & \multicolumn{3}{ c| }{RK3-W35-DM} \\ \hline
			\multicolumn{1}{ |c| }{$(N_x,N_v)$}& \multicolumn{1}{ c|  }{Mass} & \multicolumn{1}{ c| }{Momentum}& \multicolumn{1}{ c| }{Energy} & \multicolumn{1}{ c|  }{Mass} & \multicolumn{1}{ c| }{Momentum}& \multicolumn{1}{ c| }{Energy}\\ \hline	
			\multicolumn{1}{ |c|  }{$(100,40)$}& 3.74e-07 & 1.51e-05
			& 4.07e-06 & 5.47e-15 & 7.52e-14
			& 8.59e-14 \\
			\multicolumn{1}{ |c|  }{$(100,50)$}& 5.80e-12 & 3.58e-10
			& 9.66e-11 & 5.55e-14 & 4.04e-13
			& 5.22e-13 \\
			\multicolumn{1}{ |c|  }{$(100,60)$}& 1.70e-13 & 1.45e-13
			& 3.25e-13 & 1.92e-13 & 1.45e-13
			& 3.43e-13 \\
			\multicolumn{1}{ |c|  }{$(100,90)$}& 1.47e-13 & 8.85e-14
			& 1.85e-13 & 1.30e-13 & 8.51e-14
			& 2.27e-13 \\ \hline
			\multicolumn{1}{ |c|  }{$(200,40)$}& 7.70e-07 & 4.37e-06 & 8.38e-06  & 1.28e-14 & 2.39e-15 & 7.45e-14\\
			\multicolumn{1}{ |c|  }{$(200,50)$}& 1.21e-11 & 1.03e-10 & 1.99e-10  & 2.13e-13 & 3.04e-13 & 2.59e-13\\		
			\multicolumn{1}{ |c|  }{$(200,60)$}& 3.63e-13 & 1.74e-14 & 1.28e-13  & 3.73e-13 & 8.07e-14 & 1.79e-13\\
			\multicolumn{1}{ |c|  }{$(200,90)$}& 2.85e-13 & 1.47e-13 & 1.51e-13  & 2.83e-13 & 1.62e-13 & 1.75e-13\\
			\hline
	\end{tabular}}
	\caption{RK-based schemes, $CFL=2$. Conservation error of discrete moments in relative $L_1$ norm for Test 1. Comparison between continuous one (CM) and discrete Maxwellian (DM)}
	\label{tab3}
\end{table}

\begin{table}[h]
	\centering
	{\begin{tabular}{|cccc|ccc|}
			\hline
			\multicolumn{1}{ |c| }{} & \multicolumn{3}{ c| }{BDF3-W35-CM} & \multicolumn{3}{ c| }{BDF3-W35-DM} \\ \hline
			\multicolumn{1}{ |c| }{$(N_x,N_v)$}& \multicolumn{1}{ c|  }{Mass} & \multicolumn{1}{ c| }{Momentum}& \multicolumn{1}{ c| }{Energy} & \multicolumn{1}{ c|  }{Mass} & \multicolumn{1}{ c| }{Momentum}& \multicolumn{1}{ c| }{Energy}\\ \hline	
			\multicolumn{1}{ |c|  }{$(100,40)$}& 2.13e-07 & 6.30e-07
			& 2.31e-06 & 3.55e-15 & 6.50e-15
			& 3.40e-15 \\
			\multicolumn{1}{ |c|  }{$(100,50)$}& 3.33e-12 & 1.50e-11
			& 5.47e-11 & 6.08e-14 & 6.09e-14
			& 3.43e-14 \\
			\multicolumn{1}{ |c|  }{$(100,60)$}& 1.05e-13 & 1.95e-14
			& 1.61e-14 & 1.06e-13 & 3.42e-15
			& 6.49e-15 \\
			\multicolumn{1}{ |c|  }{$(100,90)$}& 8.12e-14 & 1.16e-14
			& 1.17e-14 & 8.39e-14 & 3.25e-14
			& 1.54e-14 \\ \hline
			\multicolumn{1}{ |c|  }{$(200,40)$}& 4.31e-07 & 2.19e-06
			& 4.68e-06  & 1.49e-14 & 3.08e-14 & 3.80e-14\\
			\multicolumn{1}{ |c|  }{$(200,50)$}& 6.78e-12 & 5.22e-11 & 1.11e-10 & 1.60e-13 & 8.65e-14 & 1.51e-14\\		
			\multicolumn{1}{ |c|  }{$(200,60)$}& 1.80e-13 & 4.75e-14 & 3.68e-14  & 1.82e-13 & 5.61e-14 & 4.48e-14\\
			\multicolumn{1}{ |c|  }{$(200,90)$}& 1.34e-13 & 8.03e-14 & 3.77e-14  & 1.36e-13 & 8.99e-14 & 4.42e-14\\
			\hline
	\end{tabular}}
	
	\caption{BDF-based schemes, $CFL=2$. Conservation error of discrete moments in relative $L_1$ norm for Test 1. Comparison between continuous one (CM) and discrete Maxwellian (DM)}
	\label{tab4}
\end{table}


When using with the continuous Maxwellian with the conservative correction,  a negligible conservation error can be achieved, but only using a large enough number of velocity grid points. In contrast, conservation error can be suppressed to a negligible level with relatively small number of velocity grid points when the discrete Maxwellian is used (See Table \ref{tab3} and \ref{tab4} with $N_v = 40$). We present the conservation error estimates in Appendix \ref{Conservation error Estimates}.

It appears that the combined use of discrete Maxwellian and conservative correction provides a scheme which maintains conservation within round-off error.
In particular, the use of discrete Maxwellian allows to maintain conservation with a small number of velocity nodes, which is particularly useful when adopting the method to capture the fluid dynamic limit for small Knudsen number.

\subsection{Test 2}

This test is proposed in \cite{GRS} to check the accuracy of the scheme.
The initial condition for the distribution function is the Maxwellian 
$$
f_0(x,v)=\frac{\rho_0}{\sqrt{2 \pi T_0}}\exp\left(-\frac{|v-u_0(x)|^2}{2T_0}\right),
$$ 
where initial velocity profile is given by
\[
u_0(x) = 0.1 \exp\left(-(10x - 1)^2\right) - 2 \exp\left(-(10x + 3)^2\right).
\]
Initial density and temperature are uniform, with constant value $\rho_0(x) = 1 $ and $T_0(x) = 1$. We use the periodic boudndary condition. The computation is performed on $ (x, v) \in [-1,1] \times [-10,10]$. Since shock appears for $\kappa=10^{-6}$ at $t = 0.35$, the final time is taken $T_f=0.32$.  We take $N_x = 160, 320, 640, 1280, 2560$, and $5120$ and uniform grid points in $x $ direction and
$N_v = 20$ uniform grid points in $v$ direction. We have used differnt CFL based on the stability analysis. For RK schemes, we choose CFL $= 2$. For BDF2 and BDF3, we set CFL$=0.5$. We compute the relative $L^1$ norm to check the accuracy.

\begin{table}[h]
	\begin{center}
		
		{\begin{tabular}{|ccccccccc|}
				\hline
				\multicolumn{9}{ |c| }{Test 2 RK2-W23-DM Mass, CFL=2} \\ \hline
				\multicolumn{1}{ |c }{}& \multicolumn{2}{ |c  }{$\kappa=10^{-6}$} & \multicolumn{2}{ |c }{$\kappa=10^{-4}$}& \multicolumn{2}{ |c| }{$\kappa=10^{-2}$} &
				\multicolumn{2}{ |c| }{$\kappa=10^{-0}$} \\ \hline
				\multicolumn{1}{ |c  }{$N_x$} &
				\multicolumn{1}{ |c  }{error} &
				\multicolumn{1}{ c|  }{rate} &
				\multicolumn{1}{ |c  }{error} &
				\multicolumn{1}{ c|  }{rate} &
				\multicolumn{1}{ |c  }{error} &
				\multicolumn{1}{ c|  }{rate} &
				\multicolumn{1}{ |c  }{error} &
				\multicolumn{1}{ c|  }{rate}     \\ \hline	
				\multicolumn{1}{ |c|  }{160-320}& 1.01e-03
				&1.74
				&9.80e-04
				&1.79
				&1.79e-04&2.17
				&7.13e-04
				&1.69
				
				\\
				\multicolumn{1}{ |c|  }{320-640}& 3.02e-04
				&1.78
				&
				2.84e-04&1.86
				&
				3.97e-05&2.06
				&2.21e-04
				&1.84
				\\
				\multicolumn{1}{ |c|  }{640-1280}& 8.83e-05
				&2.17
				&
				7.82e-05&2.29
				&
				9.54e-06&2.01
				&6.19e-05
				&2.12
				\\
				\multicolumn{1}{ |c|  }{1280-2560}& 
				1.96e-05
				&2.38
				&
				1.60e-05&2.37
				&
				2.37e-06&1.99
				&1.42e-05
				&2.53
				\\
				\multicolumn{1}{ |c|  }{2560-5120}& 
				3.76e-06
				&&
				3.09e-06&&
				5.95e-07&&2.47e-06
				& 
				\\
				\hline
		\end{tabular}}\\
		
		{\begin{tabular}{|ccccccccc|}
				\hline
				\multicolumn{9}{ |c| }{Test 2 BDF2-W23-DM Mass, CFL=0.5} \\ \hline
				\multicolumn{1}{ |c }{}& \multicolumn{2}{ |c  }{$\kappa=10^{-6}$} & \multicolumn{2}{ |c }{$\kappa=10^{-4}$}& \multicolumn{2}{ |c| }{$\kappa=10^{-2}$} &
				\multicolumn{2}{ |c| }{$\kappa=10^{-0}$} \\ \hline
				\multicolumn{1}{ |c  }{$N_x$} &
				\multicolumn{1}{ |c  }{error} &
				\multicolumn{1}{ c|  }{rate} &
				\multicolumn{1}{ |c  }{error} &
				\multicolumn{1}{ c|  }{rate} &
				\multicolumn{1}{ |c  }{error} &
				\multicolumn{1}{ c|  }{rate} &
				\multicolumn{1}{ |c  }{error} &
				\multicolumn{1}{ c|  }{rate}     \\ \hline	
				\multicolumn{1}{ |c|  }{160-320}& 
				1.01e-03
				&1.74
				&9.80e-04
				&1.78
				&1.77e-04
				&2.17
				&7.09e-04
				&1.68
				\\
				\multicolumn{1}{ |c|  }{320-640}& 
				3.03e-04
				&1.77
				&2.85e-04
				&1.86
				&3.93e-05
				&2.05
				&2.21e-04
				&1.84
				\\
				\multicolumn{1}{ |c|  }{640-1280}& 
				8.86e-05
				&2.18
				&7.84e-05
				&2.29
				&9.46e-06
				&2.01
				&6.19e-05
				&2.13
				\\
				\multicolumn{1}{ |c|  }{1280-2560}&
				1.96e-05&2.38&1.60e-05
				&2.37&2.35e-06
				&1.99&1.42e-05
				&2.53
				\\
				\multicolumn{1}{ |c|  }{2560-5120}&
				3.75e-06&&3.09e-06&&5.93e-07&&2.45e-06& 
				\\
				\hline
		\end{tabular}}\\
		
		{\begin{tabular}{|ccccccccc|}
				\hline
				\multicolumn{9}{ |c| }{Test 2 RK3-W35-DM Mass, CFL=2} \\ \hline
				\multicolumn{1}{ |c }{}& \multicolumn{2}{ |c  }{$\kappa=10^{-6}$} & \multicolumn{2}{ |c }{$\kappa=10^{-4}$}& \multicolumn{2}{ |c| }{$\kappa=10^{-2}$} &
				\multicolumn{2}{ |c| }{$\kappa=10^{-0}$} \\ \hline
				\multicolumn{1}{ |c  }{$N_x$} &
				\multicolumn{1}{ |c  }{error} &
				\multicolumn{1}{ c|  }{rate} &
				\multicolumn{1}{ |c  }{error} &
				\multicolumn{1}{ c|  }{rate} &
				\multicolumn{1}{ |c  }{error} &
				\multicolumn{1}{ c|  }{rate} &
				\multicolumn{1}{ |c  }{error} &
				\multicolumn{1}{ c|  }{rate}     \\ \hline	
				\multicolumn{1}{ |c|  }{160-320}& 5.74e-05&3.31
				&5.07e-05
				&3.47
				&2.28e-06
				&4.34
				&1.28e-05
				&4.74
				\\
				\multicolumn{1}{ |c|  }{320-640}& 
				5.77e-06&4.23
				&4.58e-06
				&4.39
				&1.12e-07
				&3.59
				&4.80e-07
				&4.88
				\\
				\multicolumn{1}{ |c|  }{640-1280}& 
				3.08e-07&4.61
				&2.19e-07
				&4.43
				&9.31e-09
				&3.09
				&1.63e-08
				&4.66
				\\
				\multicolumn{1}{ |c|  }{1280-2560}& 
				1.26e-08&4.28
				&1.02e-08
				&3.58&1.09e-09
				&2.98
				&6.43e-10
				&4.06
				\\
				\multicolumn{1}{ |c|  }{2560-5120}& 
				6.49e-10&&8.50e-10&&1.38e-10&&3.84e-11& 
				\\
				\hline
		\end{tabular}}\\
		
		{\begin{tabular}{|ccccccccc|}
				\hline
				\multicolumn{9}{ |c| }{Test 2 BDF3-W35-DM Mass, CFL=0.5} \\ \hline
				\multicolumn{1}{ |c }{}& \multicolumn{2}{ |c  }{$\kappa=10^{-6}$} & \multicolumn{2}{ |c }{$\kappa=10^{-4}$}& \multicolumn{2}{ |c| }{$\kappa=10^{-2}$} &
				\multicolumn{2}{ |c| }{$\kappa=10^{-0}$} \\ \hline
				\multicolumn{1}{ |c  }{$N_x$} &
				\multicolumn{1}{ |c  }{error} &
				\multicolumn{1}{ c|  }{rate} &
				\multicolumn{1}{ |c  }{error} &
				\multicolumn{1}{ c|  }{rate} &
				\multicolumn{1}{ |c  }{error} &
				\multicolumn{1}{ c|  }{rate} &
				\multicolumn{1}{ |c  }{error} &
				\multicolumn{1}{ c|  }{rate}     \\ \hline	
				\multicolumn{1}{ |c|  }{160-320}& 
				5.59e-05
				&3.30
				&4.93e-05
				&3.47
				&1.93e-06
				&4.99
				&1.26e-05
				&4.81
				\\
				\multicolumn{1}{ |c|  }{320-640}& 
				5.69e-06
				&4.28
				&4.44e-06
				&4.47
				&6.07e-08
				&5.31
				&4.47e-07
				&5.18
				\\
				\multicolumn{1}{ |c|  }{640-1280}& 
				2.93e-07
				&4.77
				&2.01e-07
				&4.90
				&1.53e-09
				&4.61
				&1.24e-08
				&5.00
				\\
				\multicolumn{1}{ |c|  }{1280-2560}&
				1.07e-08
				&4.96&6.72e-09
				&4.98&6.27e-11
				&2.38&3.85e-10
				&2.95
				\\
				\multicolumn{1}{ |c|  }{2560-5120}&
				3.45e-10&&2.13e-10&&1.20e-11&&5.00e-11& 
				\\
				\hline
		\end{tabular}}
	\end{center}
	\label{tab:test2}
	\caption{Test 2: convergence rate for second and third order RK and BDF schemes. A final time $T_f = 0.32$ is selected such that the solution is still smooth even in the limit of vanishing Knudsen number. For such small time, space error appears to be dominant, and this explains the order of accuracy higher than expected from the order of the RK or BDF schemes. Some order reduction is observed in intermediate regimes.}
\end{table}

%

\subsection{Test 3}


To check the AP property of the C-SL scheme, we take a similar test as in \cite{BS}. We check numerically $\|f-M\|_1 = \mathcal{O}(\kappa)$ for different values of $\kappa = 10^{-4},10^{-5},10^{-6},10^{-7}$.

We take the following non-equilibrium initial data
$$f_0(x,v)=0.5 \left(\frac{\rho_0(x)}{\sqrt{2\pi T_0(x)}}\exp \left(-\frac{(v-u_0(x))^2}{2T_0(x)}\right) + \frac{\rho_0(x)}{\sqrt{2\pi T_0(x)}}\exp \left(-\frac{(v+u_0(x))^2}{2T_0(x)}\right)\right),
$$
where initial density, velocity and temperature are given by
$$\rho_0(x) = \frac{2+\sin2\pi x}{3}, \quad
u_0(x)=\frac{\cos2\pi x}{5},\quad
T_0(x) = \frac{3+\cos2\pi x}{4}. $$

We use the periodic boundary condition. The computation is performed
on $  x \in [-1, 1]$, $ v \in [-8, 8]$. The final time is taken 0.02.

We implemented RK3-W35-DM and BDF3-W35-DM with $N_x = 100$ and CFL$=1$. In Figures \ref{Ap fig1}--\ref{Ap fig4}, we show the time evolution of $\|f-\mathcal{M}\|_1$ for our C-SL scheme for different values of $\kappa$ and different values for the number of grid points in velocity space, i.e., $N_v = 20, 32$.

From the figures it appears that the norm of the difference between $f$ and  the Maxwellian is roughly proportional to the Knudsen number $\kappa$, es expected. If a continuous Maxwellian, such a norm depends also on the number of velocity grid points, as appears in see Fig.~\ref{Ap fig1} (A) and in Fig.~\ref{Ap fig3} (A), where with $N_v=20$ the difference does not decrease significantly when going from $\kappa=10^{-7}$ to $\kappa = 10^{-8}$. On the contrary, when using a discrete Mawellian, the discrepancy between $f$ and the Maxwellian only depends on the Knudsen number: $\|f-\mathcal{M} \|_1 = \mathcal{O}(\kappa)$.


The proposed C-SL scheme (\ref{Conservative Stable DM}) is an asymptotic preserving (AP) scheme for the kinetic equation (\ref{bgk}), that is, it becomes a consistent scheme for the underlying hydrodynamic limit. 
Note that in a recent review  on AP schemes for kinetic and
hyperbolic equations \cite{Jin2}, a necessary condition to be AP for a scheme for BGK model (\ref{bgk}) is that the solution $f^n$ must be driven to the local equilibrium $\mathcal{M}^{n}$ when $\kappa \to 0$
\begin{equation}
f^n - \mathcal{M}(f^n) = \mathcal{O}(\kappa), \quad \textrm{for} \quad n \ge 1
\end{equation}
for any initial data $f^0$, namely, the numerical solution projects any data into the local
equilibrium $\mathcal{M}^{n}$, with an accuracy of $\mathcal{O}(\kappa)$, in one step.
Such AP schemes are referred to as \emph{strongly} AP.

\begin{figure}[]
	\centering
	\begin{subfigure}[b]{0.45\linewidth}
		\includegraphics[width=1\linewidth]{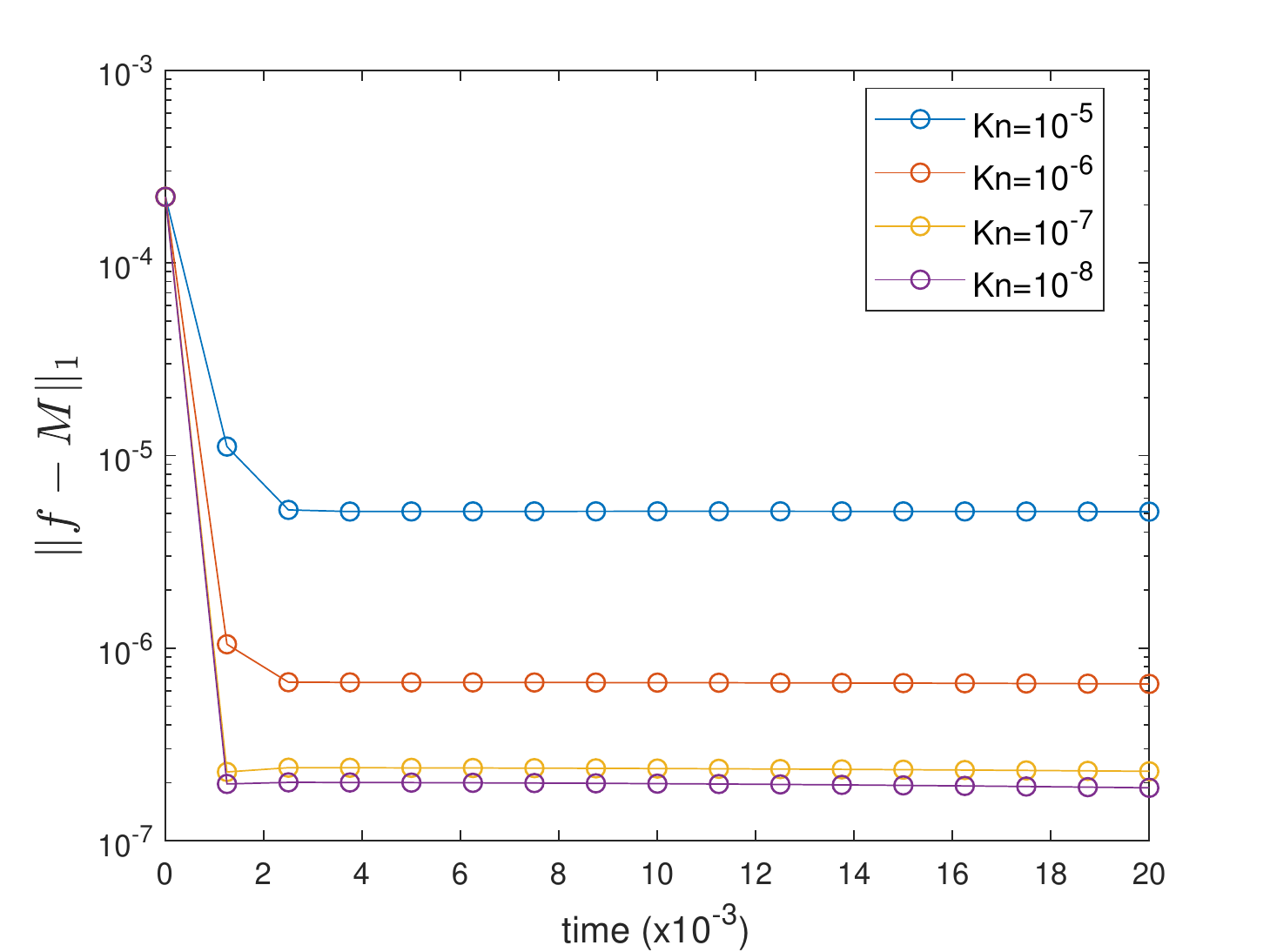}	
		\subcaption{RK3-W35-CM, $\|f-CM \|_1$}
	\end{subfigure}	
	\begin{subfigure}[b]{0.45\linewidth}
		\includegraphics[width=1\linewidth]{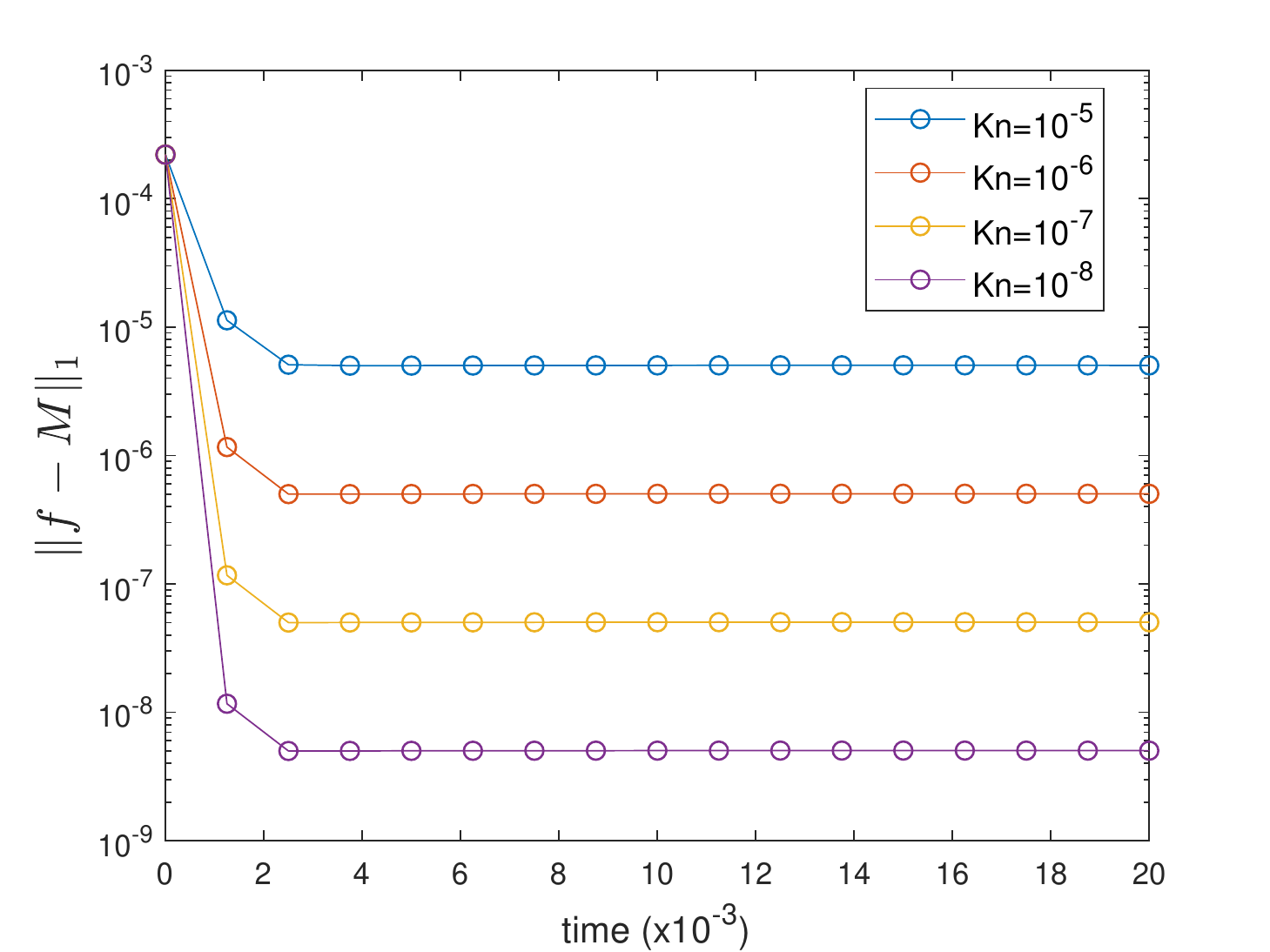}	
		\subcaption{RK3-W35-DM, $\|f-d\mathcal{M} \|_1$}
	\end{subfigure}	
	\caption{Time evolution of $\|f-M \|_1$ for high order methods for the BGK model. $N_v=20$. When using the continuous Maxwellian (left panel), the discrepancy between the distribution function and the Maxwellian saturates for small values of the Knudsen number, while the method based on the discrete Maxwellian (right panel) shows the expected behaviour $\|f-M \|_1={\mathcal O}(\kappa)$. } \label{Ap fig1}	
	\begin{subfigure}[b]{0.45\linewidth}
		\includegraphics[width=1\linewidth]{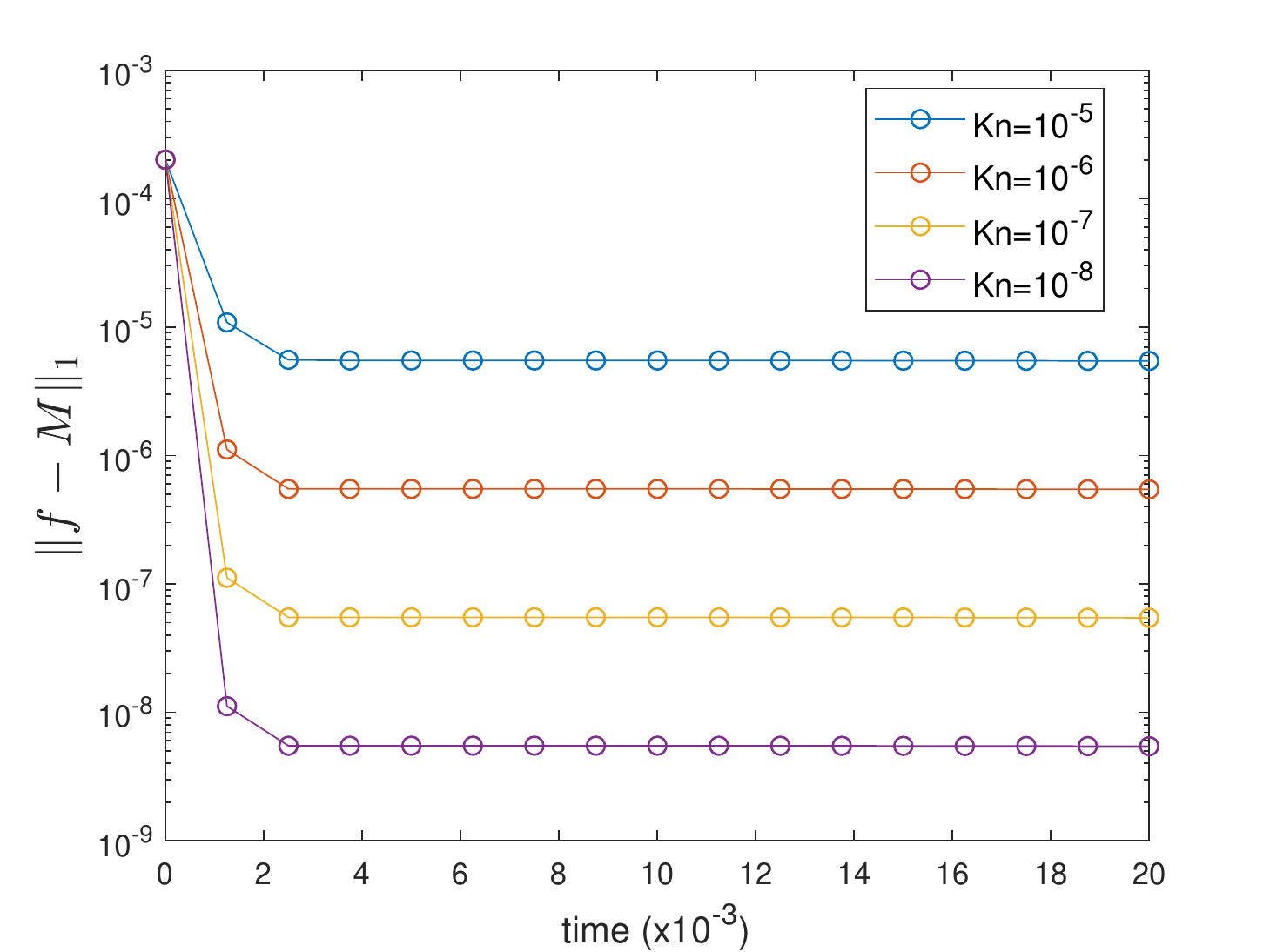}	
		\subcaption{RK3-W35-CM, $\|f-CM \|_1$}
	\end{subfigure}	
	\begin{subfigure}[b]{0.45\linewidth}
		\includegraphics[width=1\linewidth]{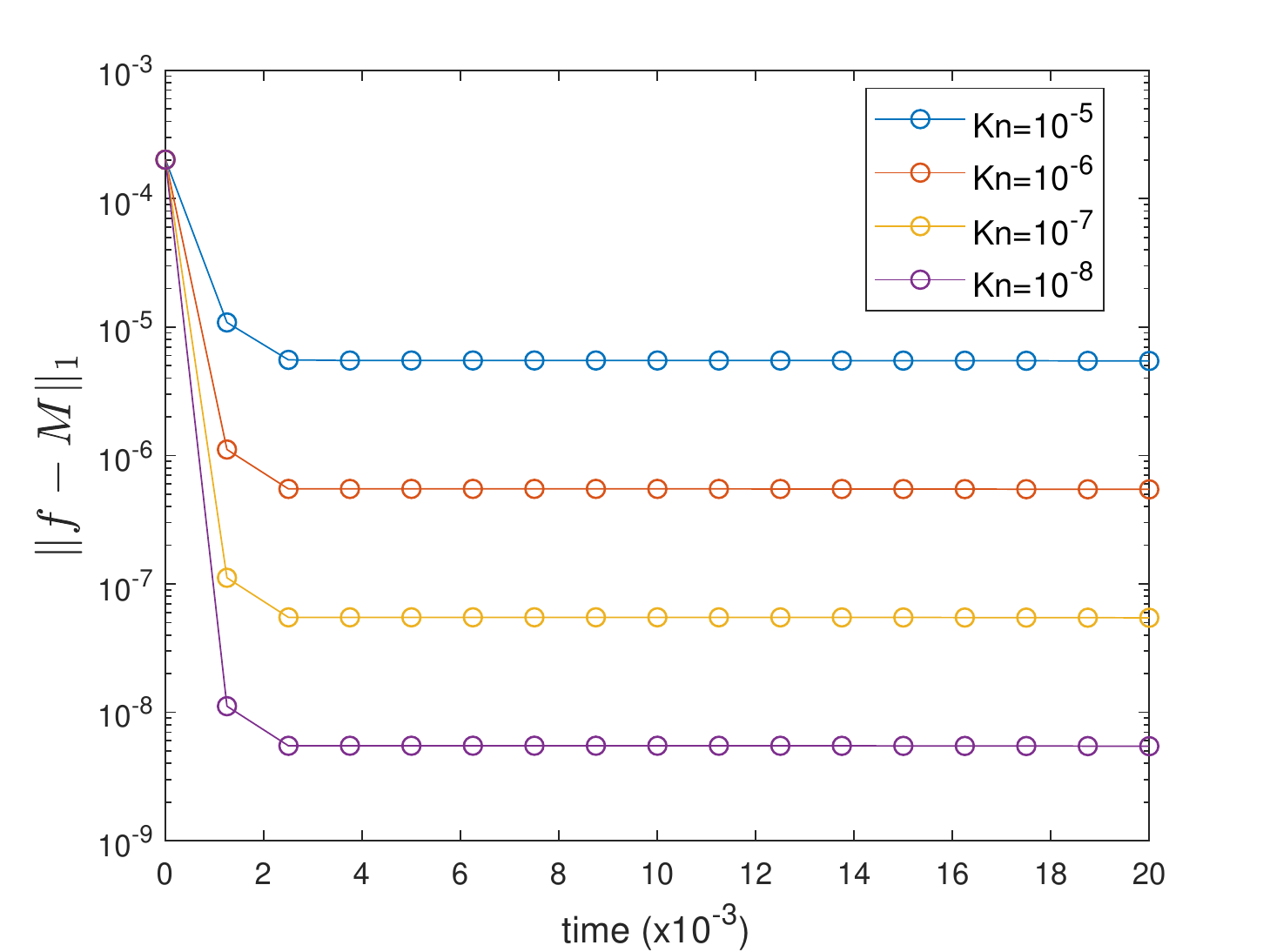}	
		\subcaption{RK3-W35-DM, $\|f-d\mathcal{M} \|_1$}
	\end{subfigure}	
	\caption{Time evolution of $\|f-M \|_1$ for high order methods for the BGK model. $N_v=32$. For large enough number of velocity grid points the discrepancy between the 
		function and the Maxwellian appears to be proportional to $\kappa$, up to $\kappa = 10^{-8}$, for both schemes.}	
	\label{Ap fig2}
\end{figure}

\begin{figure}[]
	\centering
	\begin{subfigure}[b]{0.45\linewidth}
		\includegraphics[width=1\linewidth]{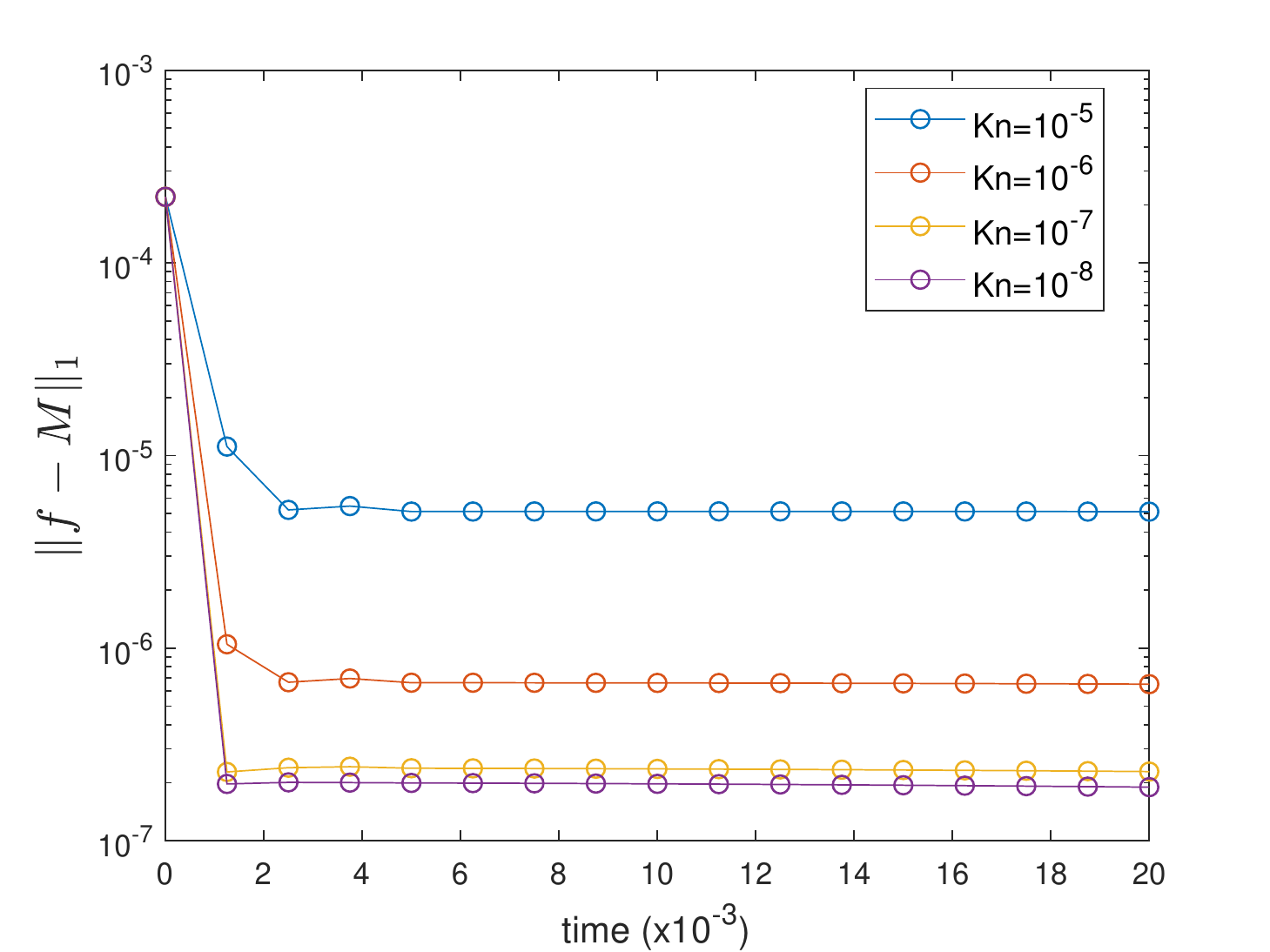}	
		\subcaption{BDF3-W35-CM, $\|f-CM \|_1$}
	\end{subfigure}	
	\begin{subfigure}[b]{0.45\linewidth}
		\includegraphics[width=1\linewidth]{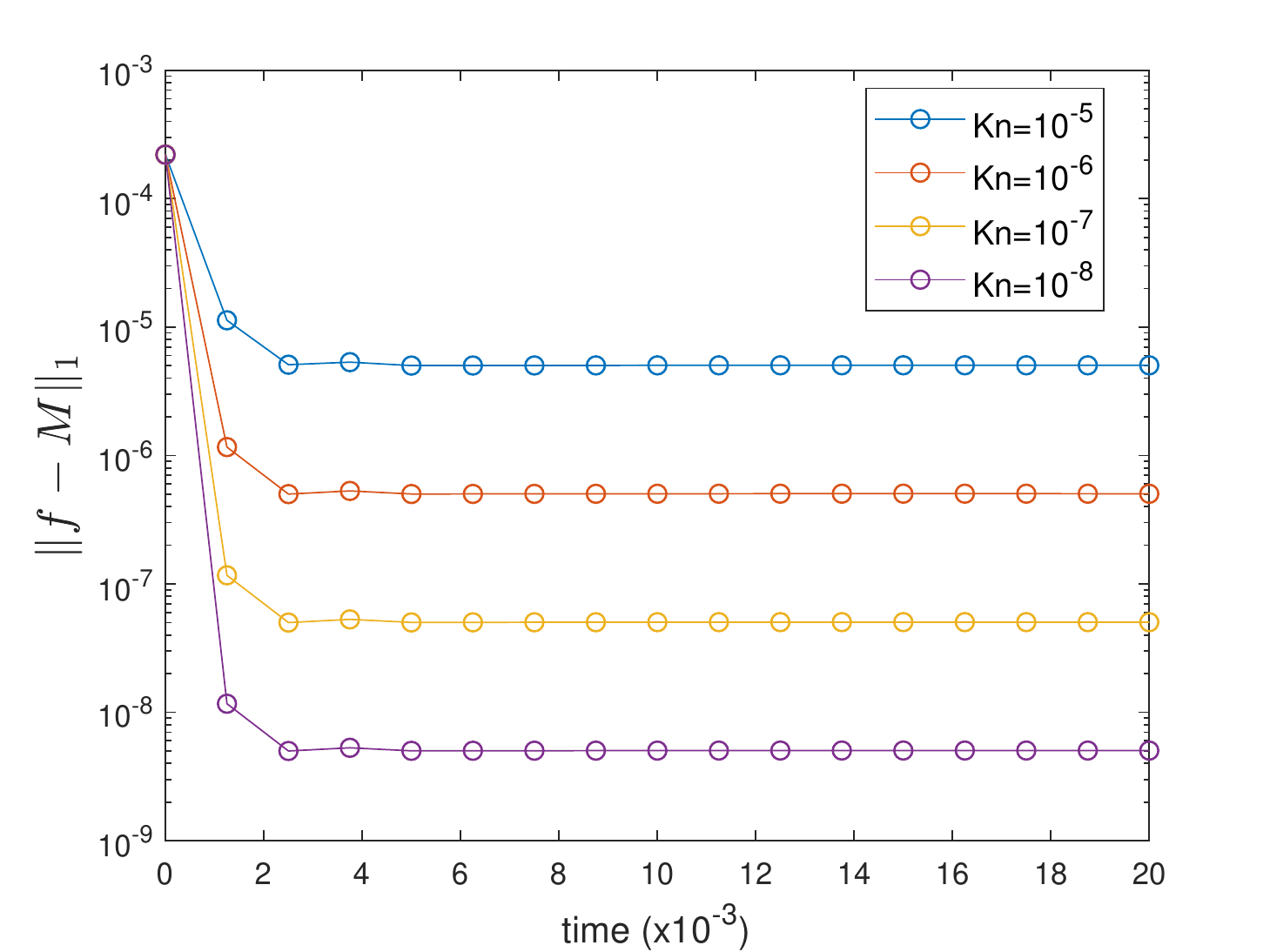}	
		\subcaption{BDF3-W35-DM, $\|f-d\mathcal{M} \|_1$}
	\end{subfigure}	
	\caption{Same as Figure \ref{Ap fig1}, but with BDF based schemes}
	\label{Ap fig3}	
	\begin{subfigure}[b]{0.45\linewidth}
		\includegraphics[width=1\linewidth]{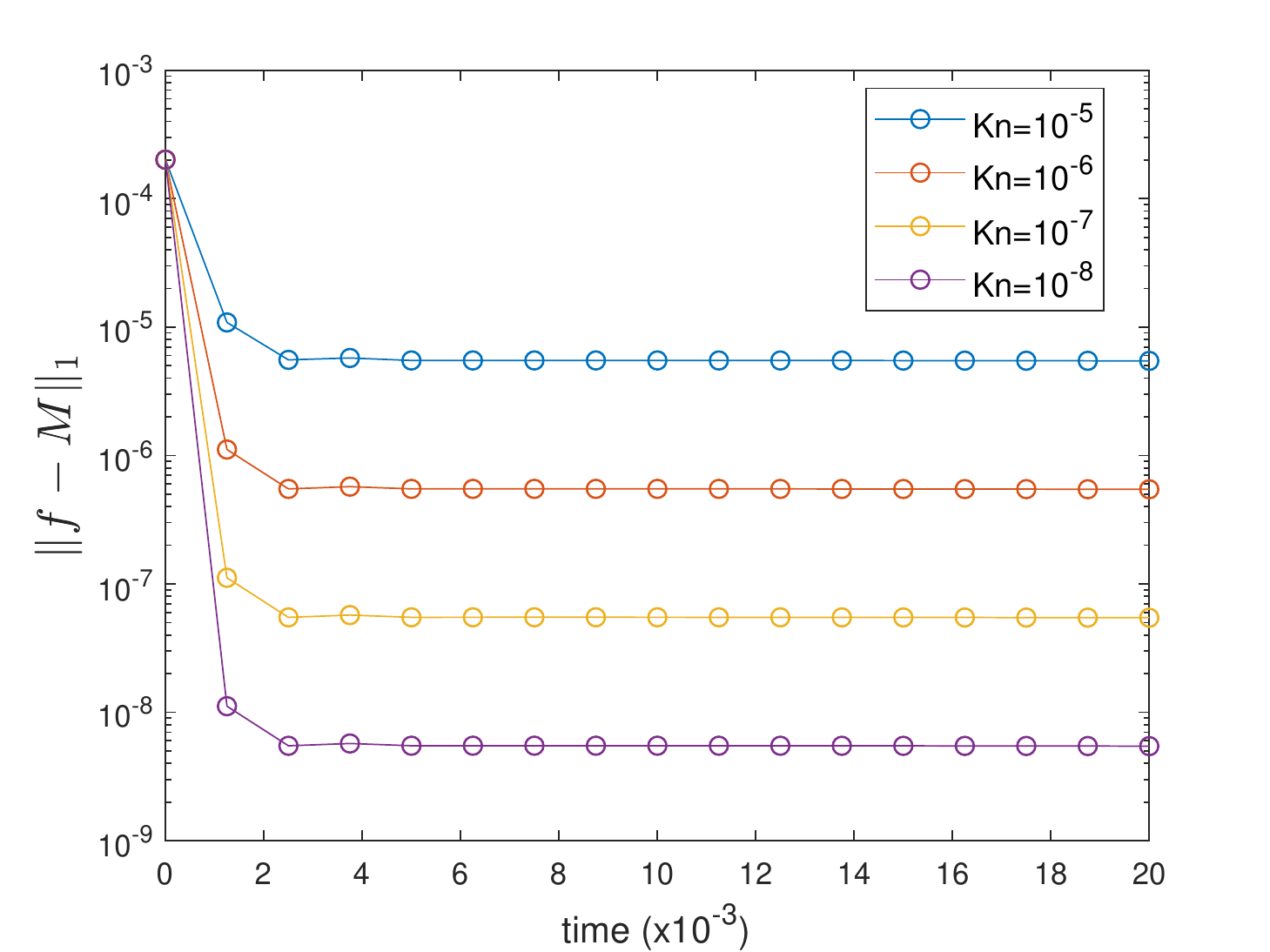}	
		\subcaption{BDF3-W35-CM, $\|f-CM \|_1$}
	\end{subfigure}	
	\begin{subfigure}[b]{0.45\linewidth}
		\includegraphics[width=1\linewidth]{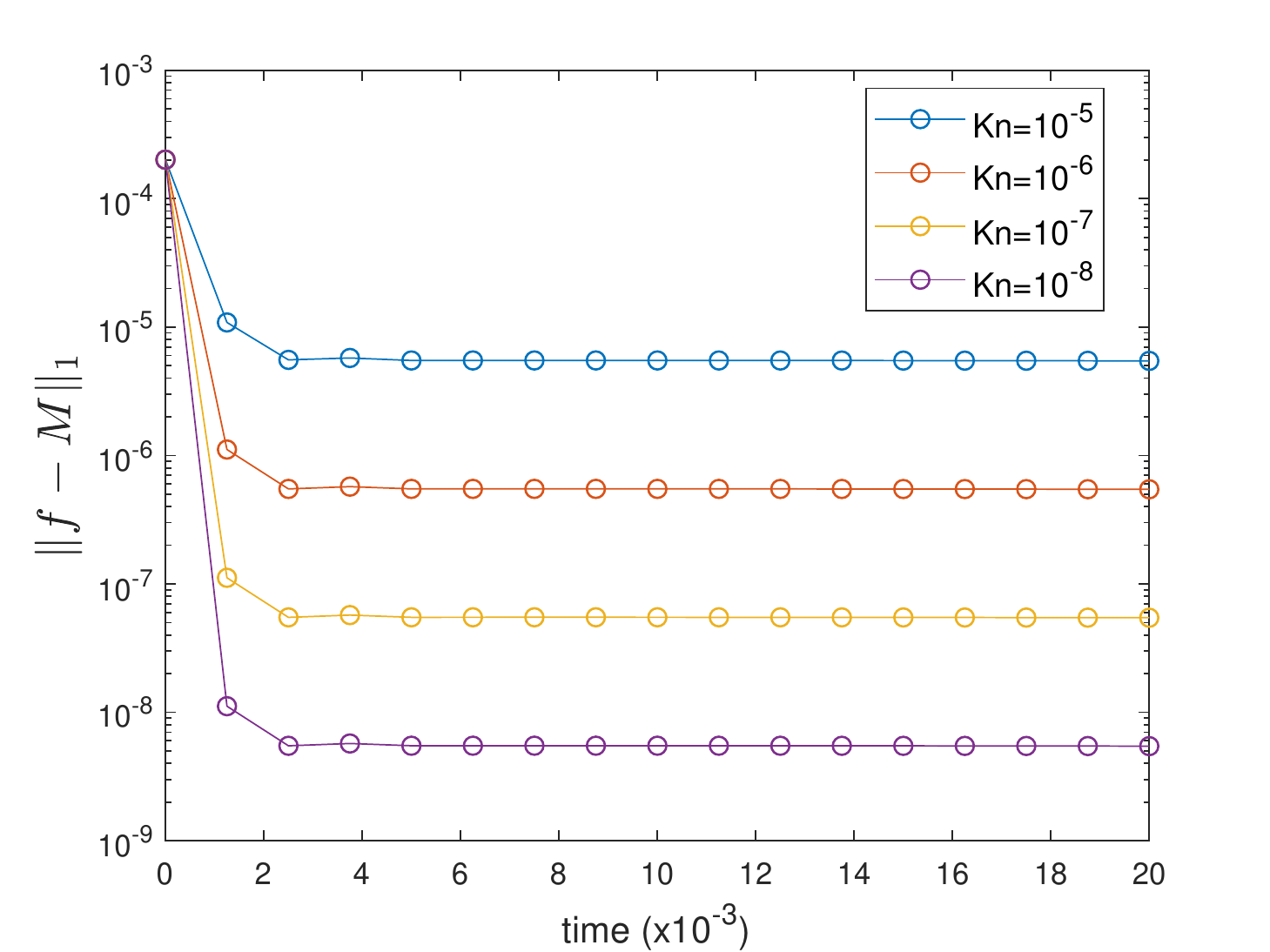}	
		\subcaption{BDF3-W35-DM, $\|f-d\mathcal{M} \|_1$}
	\end{subfigure}	
	\caption{Same as Figure \ref{Ap fig2}, but with BDF based schemes}	
	\label{Ap fig4}
\end{figure}

\subsection{Test 4}
The final test is the classical Riemann problem. To observe the Euler limit, we take $\kappa=10^{-6}$. Moreover, to see the  influence of the conservative correction, we compare our scheme with the standard non-conservative semi-Lagrangian scheme. Initial condition is given by the Maxwellian computed from
\[
(\rho_0,u_0,p_0) = \bigg\{\begin{array}{ll}
(2.25,0,1.125), & \text{for } x\le 0.5\\
(3/7,0,1/6), & \text{for } x>0.5
\end{array}
\]
We use freeflow boundary condition. Computations are performed on $ x \in [0, 1]$ , $ v \in [-10, 10]$ upto final time $T_f=0.16$.
With small Knudsen number, both conservative and non-conservative schemes are stable and they enable us to use CFL = 2.
We take $N_x=200$ for both schemes. For the proposed schemes, RK3-W35-DM and BDF3-W35-DM, we take $N_v=30$. For non-conservative schemes, RK3W35 and BDF3W35, we take $N_v=60$. The larger number of velocity grid points ensures that the conservation error due to the use of continuous Maxwellian is negligible with respect to the one due to lack of conservation of the transport term. 
The results are shown in Figure \ref{fig:Riemann}. It appears that conservative schemes capture shocks correctly and are in perfect agreement with the exact reference solution.

\begin{figure}[]
	\centering
	\begin{subfigure}[b]{0.42\linewidth}
		\includegraphics[width=1\linewidth]{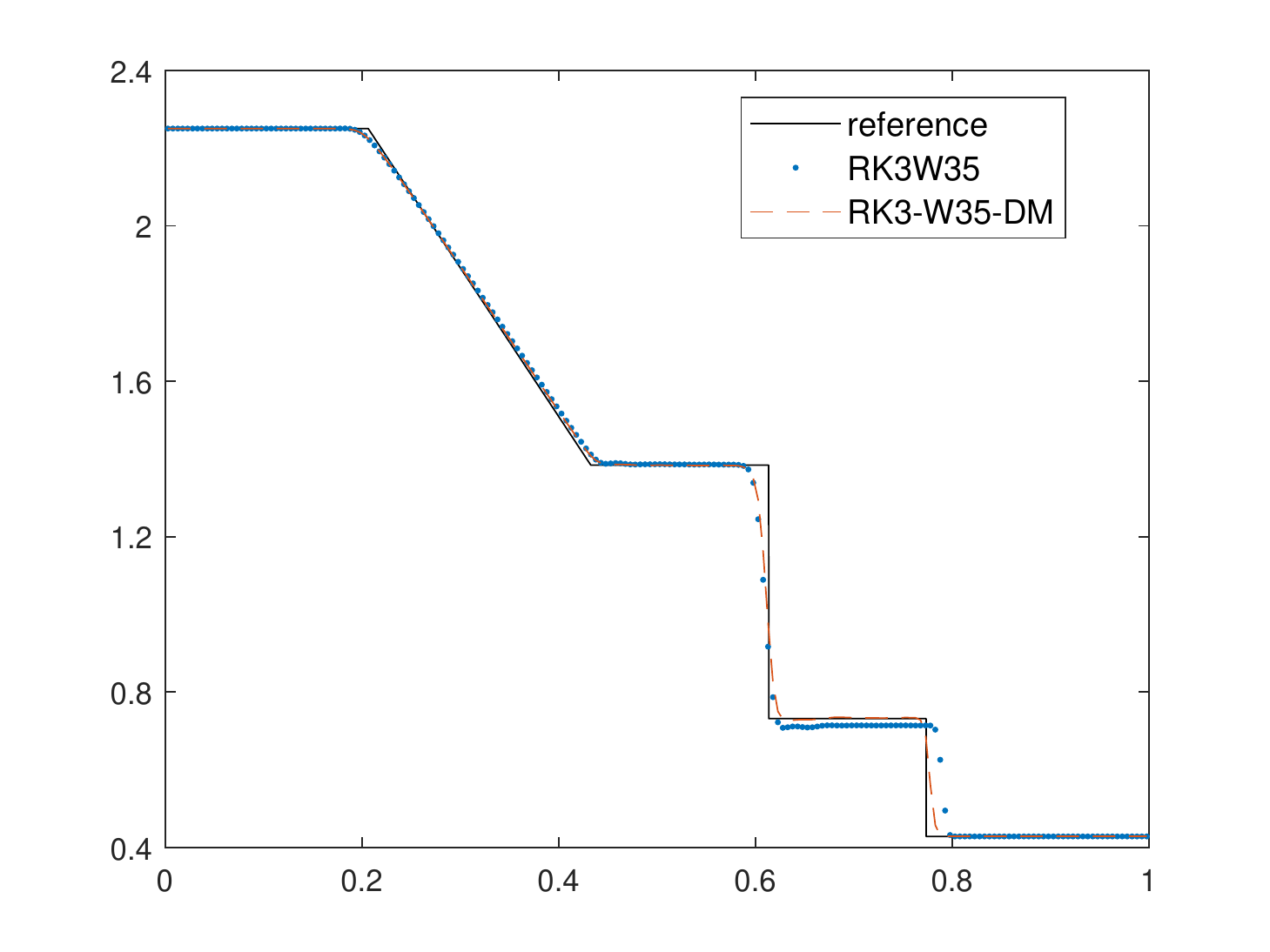}	
		\subcaption{Density}
	\end{subfigure}	
	\begin{subfigure}[b]{0.42\linewidth}
		\includegraphics[width=1\linewidth]{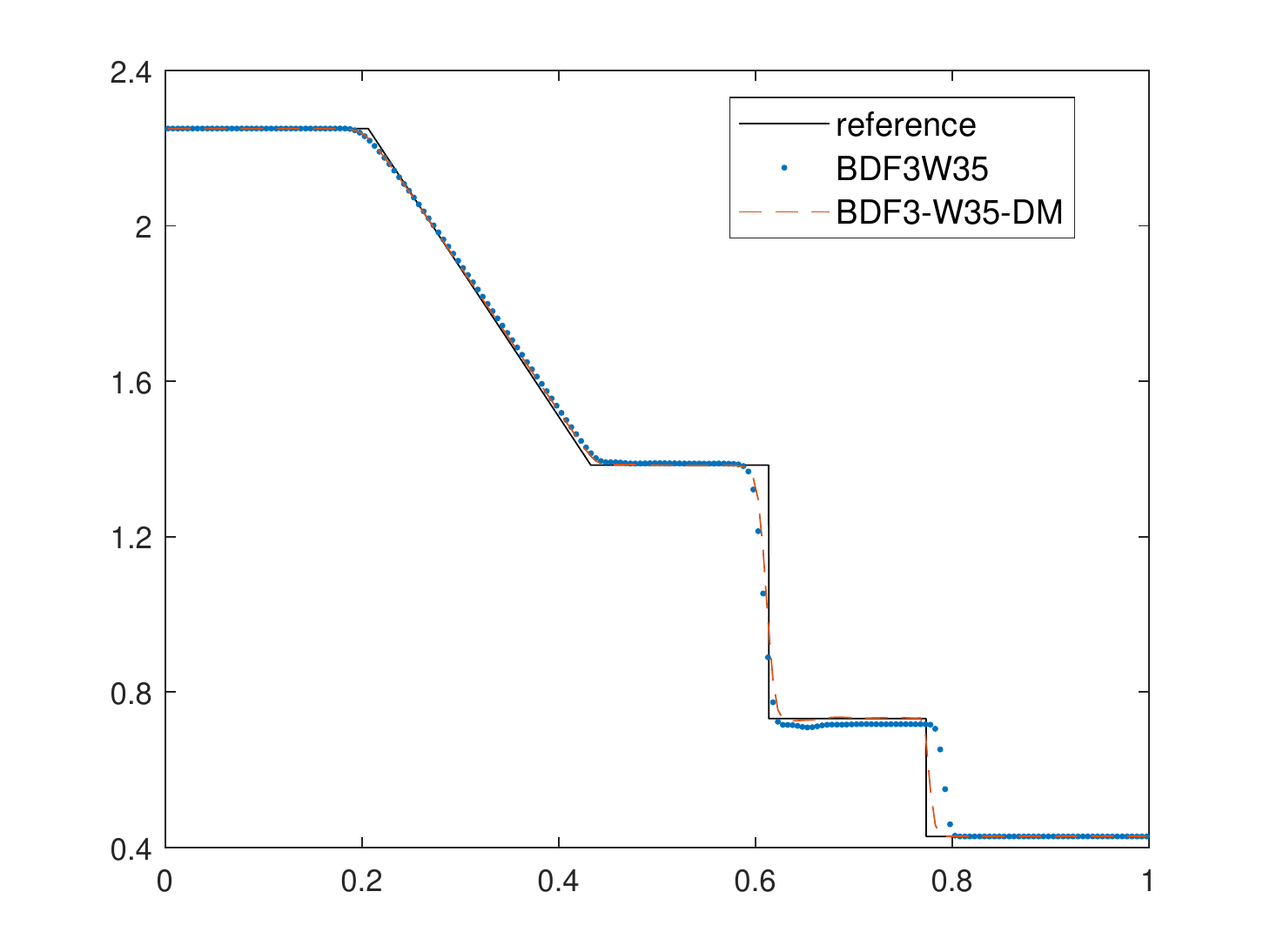}	
		\subcaption{Density}
	\end{subfigure}	
	
	\begin{subfigure}[b]{0.42\linewidth}
		\includegraphics[width=1\linewidth]{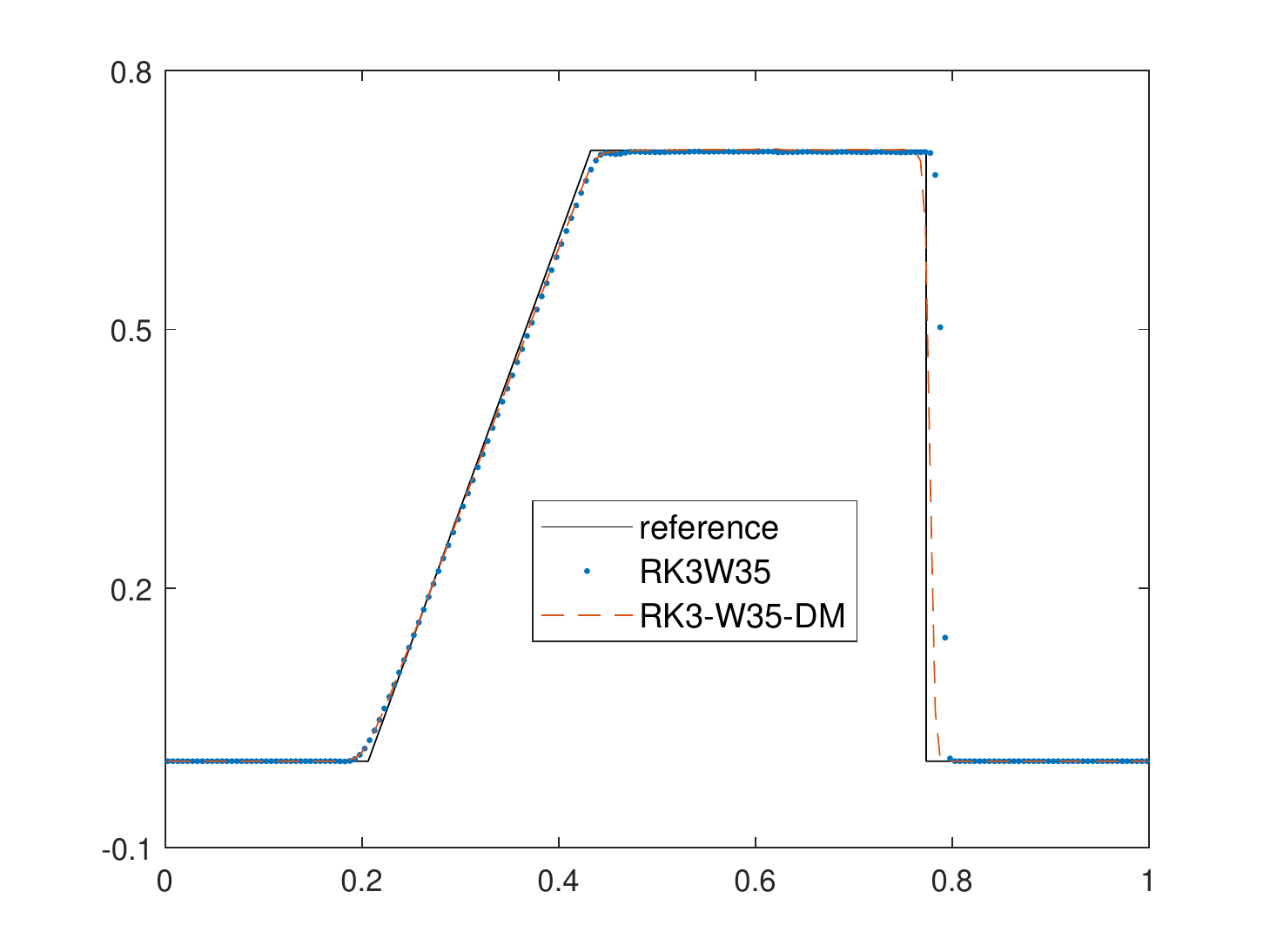}	
		\subcaption{Velocity}
	\end{subfigure}	
	\begin{subfigure}[b]{0.42\linewidth}
		\includegraphics[width=1\linewidth]{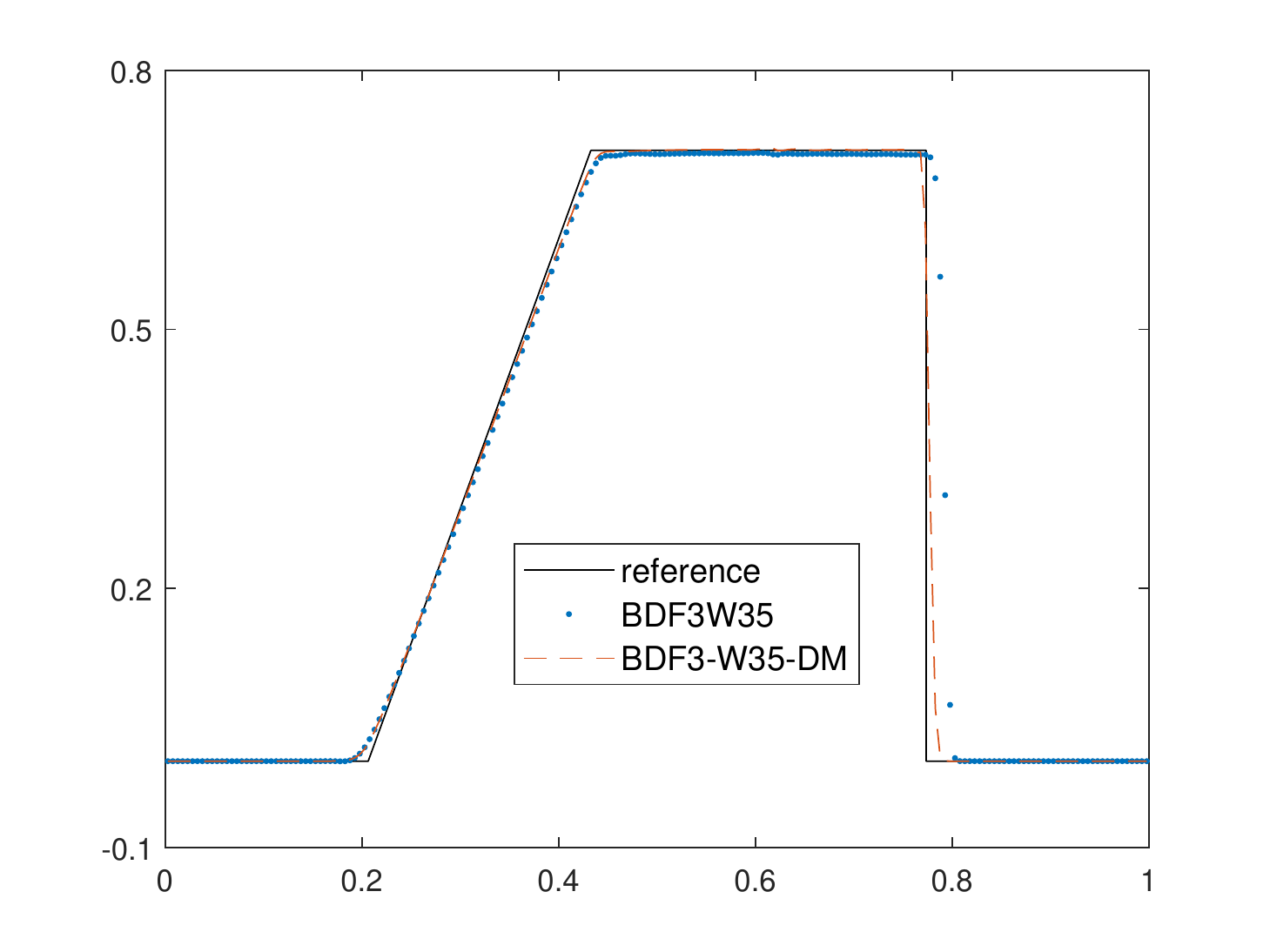}	
		\subcaption{Velocity}
	\end{subfigure}
	
	\begin{subfigure}[b]{0.42\linewidth}
		\includegraphics[width=1\linewidth]{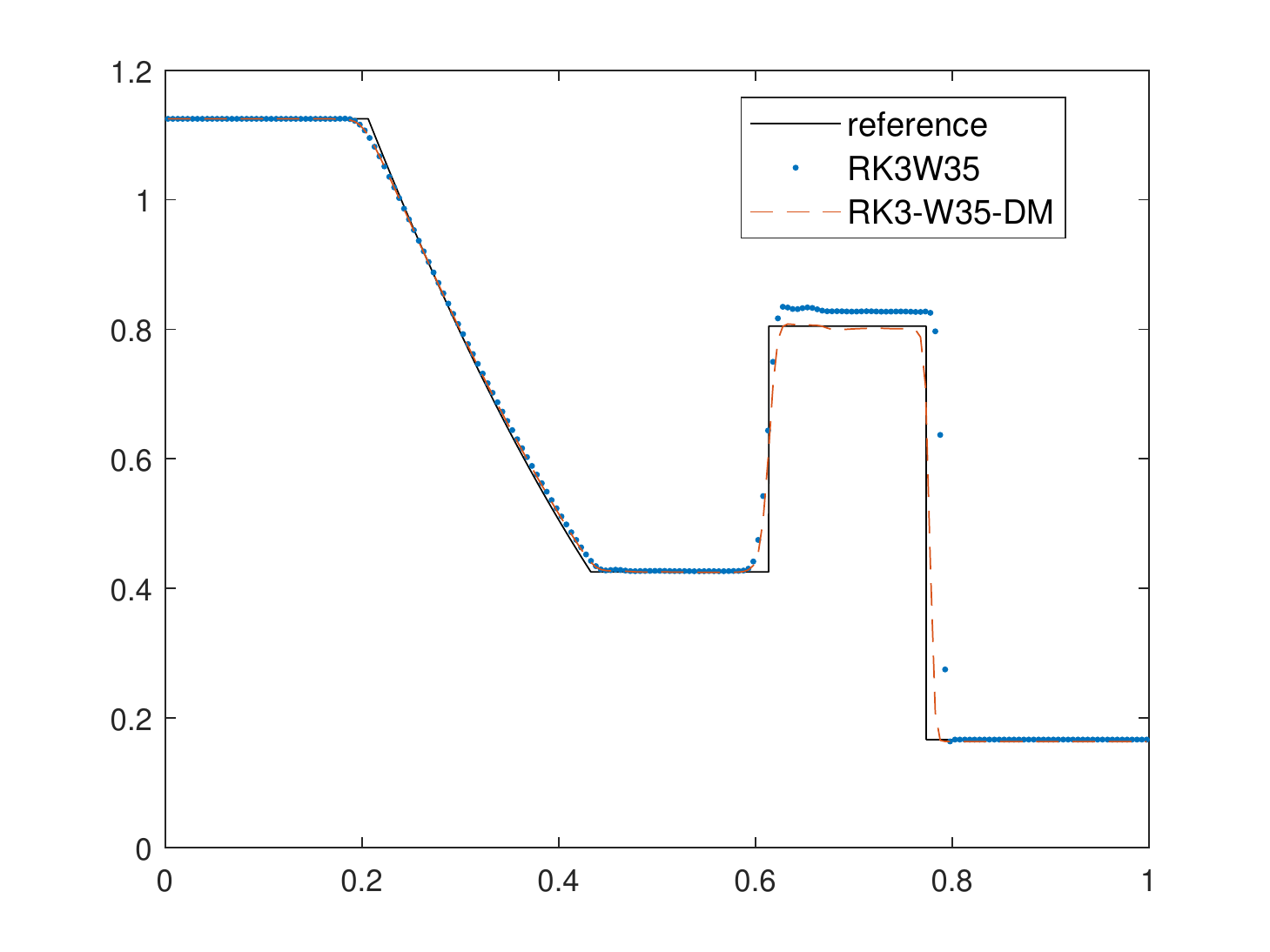}	
		\subcaption{Temperature}
	\end{subfigure}	
	\begin{subfigure}[b]{0.42\linewidth}
		\includegraphics[width=1\linewidth]{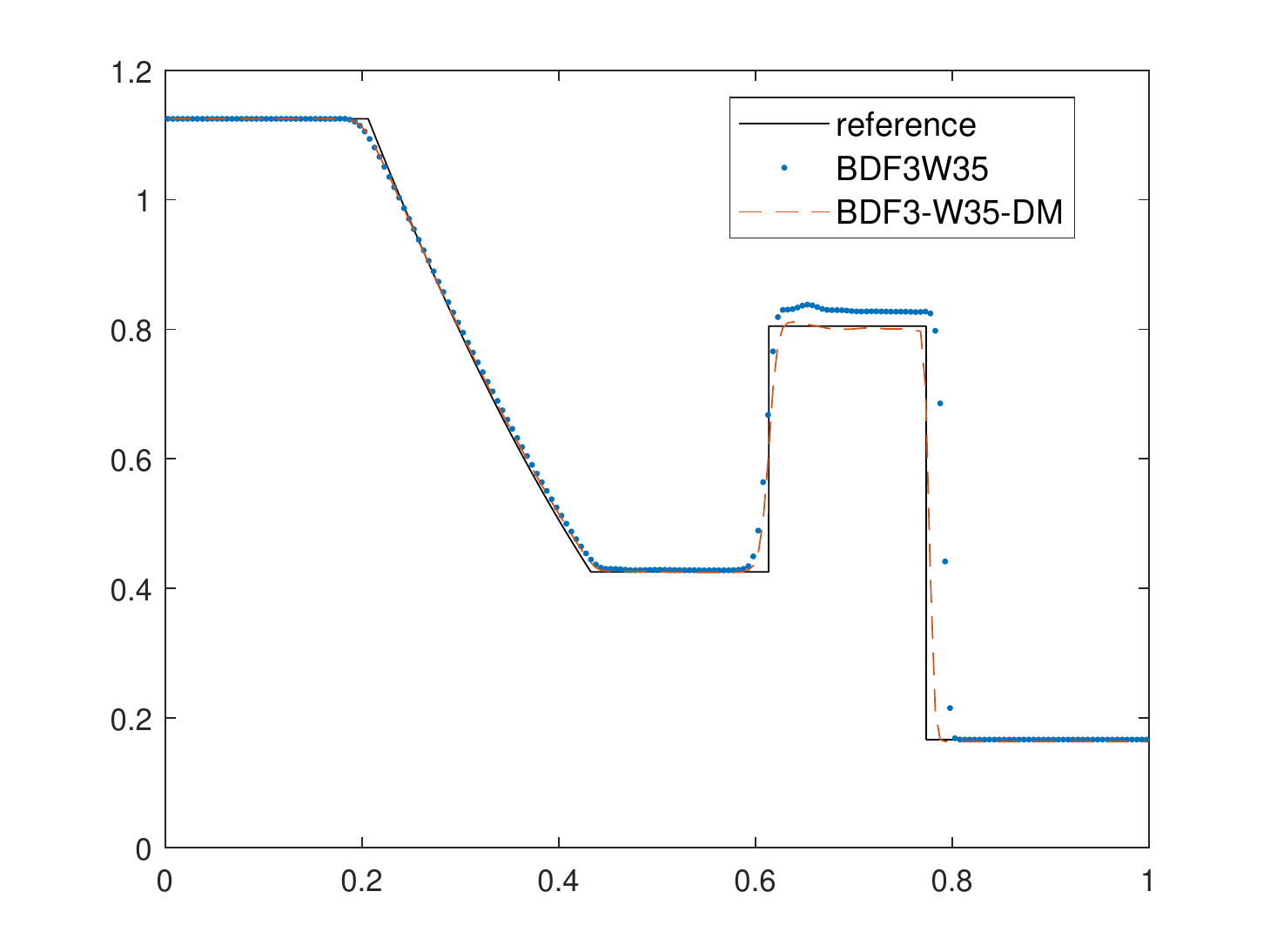}	
		\subcaption{Temperature}
	\end{subfigure}	
	
	\begin{subfigure}[b]{0.42\linewidth}
		\includegraphics[width=1\linewidth]{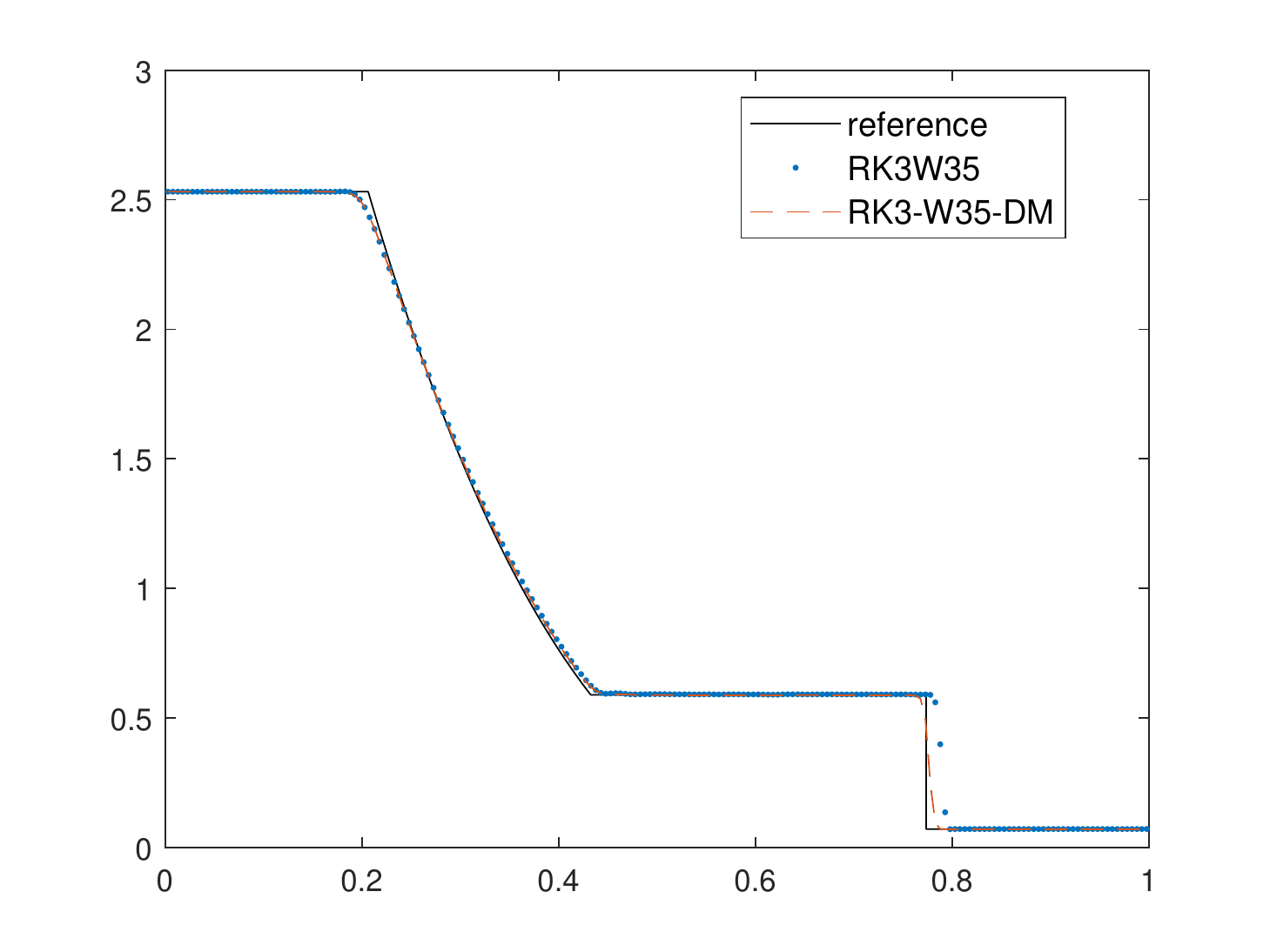}	
		\subcaption{Pressure}
	\end{subfigure}	
	\begin{subfigure}[b]{0.42\linewidth}
		\includegraphics[width=1\linewidth]{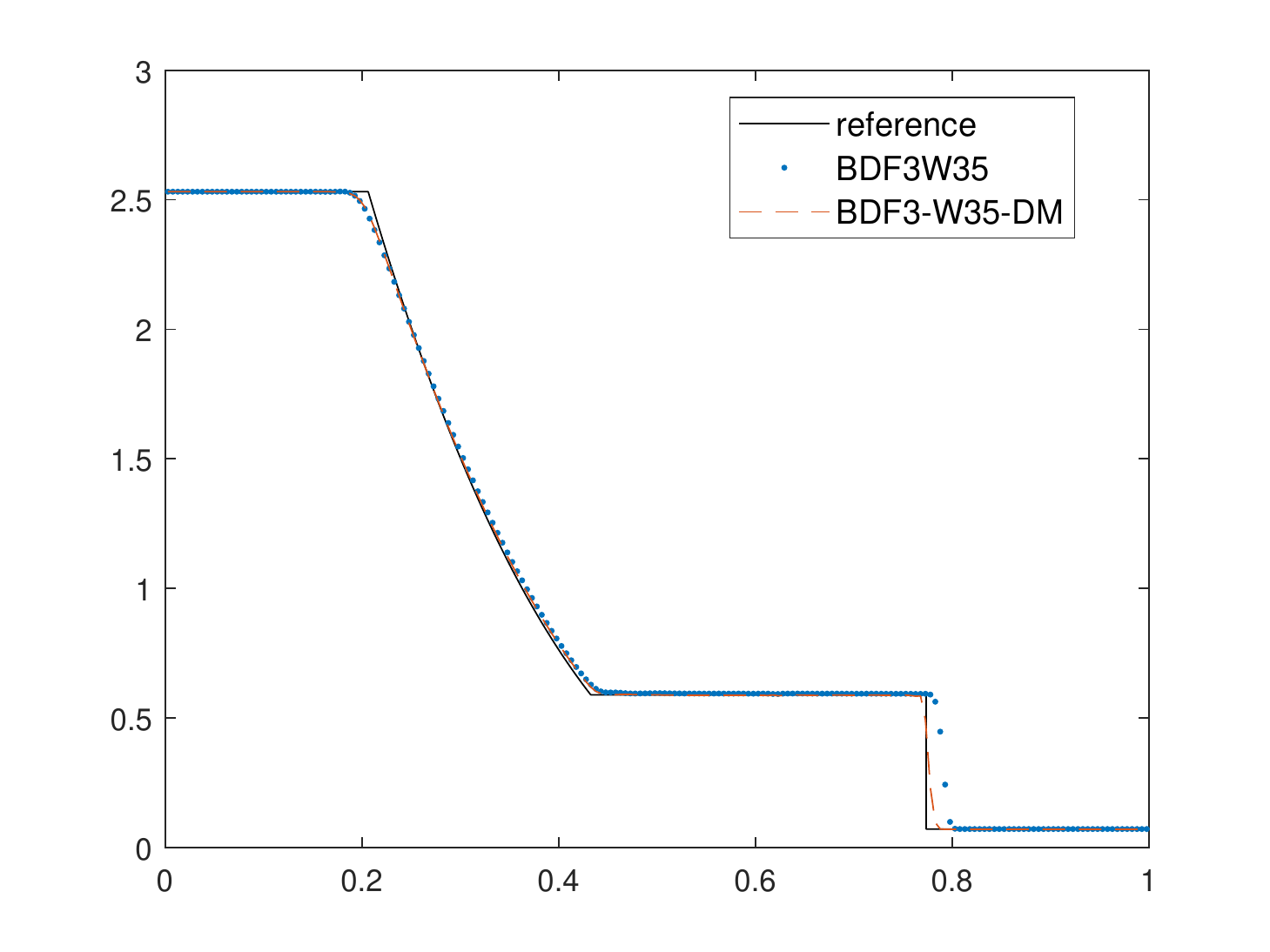}	
		\subcaption{Pressure}
	\end{subfigure}	
	
	\caption{Riemann problem in 1D space and velocity with $\kappa=10^{-6}$. From top to bottom: Density, Velocity, Temperature and Pressure. Red crosses: standard SL schemes, blue squares: new conservative schemes, black line: exact solution.}  	\label{fig:Riemann}
\end{figure}

\section{Conclusions}
In this paper we present high order conservative semi-lagrangian schemes for the numerical solution of the BGK model of the Boltzmann equation. Conservation properties are obtained by using a discrete Maxwellian in the collision operator, and by a conservative correction of the advection term. Exact conservation can be reached up to round-off errors. Together with L-stable treatment of the collisions, exact conservation allows the construction of schemes which become consistent {\em shock-capturing} schemes for the underlying Euler limit, as the Knudsen number $\kappa$ vanishes (AP property), even when using a relatively small number of grid points in velocity.

The conservation properties, and the consequent AP property, have been proven mathematically and verified in several numerical tests. A drawback of the conservative correction procedure is the limitation it imposes on the stability of the schemes. A stability analysis has been performed to understand the reason of such limitation. It is observed that Runge-Kutta based schemes have a wider stability region than multistep-bases ones, with a net improvement over Eulerian based schemes for all Knudsen numbers. Stability restrictions become less severe for small Knudsen numbers, making the schemes competitive in such regimes. 

In this paper, we only consider 1D case in both space and velocity. However, the technique is quite general and can be applied to the multi-dimensional case.

As a work in progress, we are developing new conservative semi-lagrangian schemes that do not suffer from such a CFL limitation, which will be the subject of a forthcoming paper.

\appendix
\section{Proof of the estimate on the conservation error for IE-SL scheme with the discrete Maxwellian}\label{SM proof of conservation error}

Let us check in what sense the scheme (\ref{numSol dm}) has a better conservative nature compared to that of (\ref{numSol}).
For this, we first rewrite \eqref{numSol dm} as
\begin{equation}\label{step1}
f_{i,j}^{n+1} - \tilde{f}_{ij}^n=\frac{ \Delta{t} }{\kappa + \Delta t }\left(d\mathcal{M}(\tilde{f}_{ij}^n) - \tilde{f}_{ij}^n \right),
\end{equation}
where $\tilde{f}^n_{ij} = \theta_j f^n_{i^*+1,j}+(1-\theta_j)f^n_{i^*,j}$ with
$i^* = \lfloor i-v_j \Delta t/\Delta x \rfloor$, $\theta_j = (x_i-\tilde{x}_j)/\Delta x$.

Since $\theta_j$ does not depend on $i$, we find that the following telescoping cancellation holds so that $f^n_{i,j}$ and $\tilde{f}^n_{ij}$ share the first moment:
\begin{equation}\label{telescoping}
\sum_{i = 1}^{N_x}  \tilde{f}_{ij}^n=\sum_{i = 1}^{N_x} \big(\theta_j f_{i^* +1 ,j}^n + (1-\theta_j) f_{i^*,j}^n\big)=\sum_{i = 1}^{N_x}  f_{i,j}^n.
\end{equation}
Multiplying (\ref{step1}) by $\phi(v_j) = (1; v_j; v_j^2/2)$, taking summation on $i,j$ and inserting (\ref{telescoping}), one gets
\begin{equation*}
\sum_{i = 1}^{N_x}\sum_{j=0}^{N_v}\left(f_{i,j}^{n+1} - f_{i,j}^n\right)\phi(v_j)\Delta v\Delta x=\frac{ \Delta{t} }{\kappa + \Delta t }\sum_{i = 1}^{N_x}\sum_{j=0}^{N_v}\left(d\mathcal{M}(\tilde{f}_{ij}^n) - \tilde{f}_{ij}^n \right)\phi(v_j)\Delta v\Delta x.
\end{equation*}
Summing further in time step, we have
\begin{equation}\label{nonconservative error}
\begin{split}
&\sum_{i = 1}^{N_x}\sum_{j=0}^{N_v}\left(f_{i,j}^{Nt} - f_{i,j}^0\right)\phi(v_j)\Delta v\Delta x\\
&\qquad=\frac{ \Delta{t} }{\kappa + \Delta t }\sum_{k = 0}^{N_t-1}\sum_{i = 1}^{N_x}\sum_{j=0}^{N_v}\left(d\mathcal{M}(\tilde{f}_{ij}^k) - \tilde{f}_{ij}^k \right)\phi(v_j)\Delta v\Delta x,
\end{split}
\end{equation}
then, denoting the $\ell$th component of $\phi(v_j)$ by $\phi_\ell(v_j)$, $\ell=1,2,3$, and using a variant of (\ref{dm newton}):
\begin{align}\label{dm tol}
\max_{1 \leq \ell \leq 3}\left|\sum_{j=0}^{N_v} \left[  d\mathcal{M}(\tilde{f}_{ij}^k) -\tilde{f}_{ij}^k  \right]  \phi_\ell(v_j)\Delta v\right|<tol,
\end{align}
we obtain the following estimate:
\begin{align*}
&\max_{1 \leq \ell \leq 3}\left|\sum_{i = 1}^{N_x}\sum_{j=0}^{N_v}\left(f_{i,j}^{Nt} - f_{i,j}^0\right)\phi_\ell(v_j) \Delta v\Delta x\right| \\
&\qquad\qquad \leq  \frac{ \Delta{t} }{\kappa + \Delta t }\sum_{k = 0}^{Nt-1}\sum_{i = 1}^{N_x}\max_{1 \leq \ell \leq 3}\left| \sum_{j=0}^{N_v}\left(d\mathcal{M}(\tilde{f}_{ij}^k) - \tilde{f}_{ij}^k \right)\phi_\ell(v_j)\Delta v \right| \Delta x \cr
&\qquad\qquad \leq  \frac{N_t \Delta{t} }{\kappa + \Delta t }(x_{max}-x_{min}) tol.
\end{align*}
This estimate tells us that error is stacked in each time step by $tol$. So, the total conservation error in the end essentially depends on $N_t \times  tol$ uniformly in $\kappa$. Therefore, $tol$ should be taken small enough to attain a machine precision conservation error.

\section{General framework of G-WENO interpolation}\label{G-Weno}
In this section, we illustrate the G-WENO interpolation of degree $2n-1$. Let $U=\{u_j\}, j \in I$ be a set of given values of a function $u$ on a space grid $x_j$, $j \in I$.

We start with the Lagrange polynomial $Q(x)$ built on the stencil $S =\{x_{j-n+1},...,x_{j+n}\}$:

\begin{equation}\label{Qp}
Q(x)=\sum^n_
{k=1}
C_k(x)P_k(x),
\end{equation}
where the ``linear weights" $C_k(x)$ are polynomials of degree $n-1$ and $P_k$ are polynomials of degree $n$ interpolating $U$ on the stencil $S_k =\{x_{j-n+k},...,x_{j+k}\}$, $k=1,...,n$. The linear
weights $C_k(x)$ ($k=1,...,n$) are determined to satisfy the following two properties \cite{CFR}:
\begin{enumerate}
	\item $\displaystyle C_k(x_i)=0 \text{ for } x_i \in S-S_k$.
	\item $\displaystyle \sum_k C_k(x_i)=1 \text{ for } x_i \in S.$
\end{enumerate}

To guarantee non-oscillatory property, we replace the linear weights $C_k(x)$ by the non-linear weights $\omega_k(x)$:
\begin{equation}
\omega_k(x)= \frac{\alpha_k(x)}{\sum_l\alpha^l(x)},
\end{equation}
where $\alpha_k(x)$ is defined by
\begin{equation}\label{alpha}
\alpha_k(x)=\frac{ C_k(x)}{(\beta_k+\epsilon)^2},
\end{equation}
with the choice of $\epsilon = 10^{-6}$.  The smoothness indicators $\beta_k$ in (\ref{alpha}) is defined by
\begin{equation}
\beta_k =
\sum^n_{l=1}
\int^{x_{j+1}}_{x_j}\Delta x^{2l-1}(P_k^{(l)} )^2 dx.
\end{equation}

The nonlinear weights $\omega_k(x)$ are designed to put more weights on the smooth part of $u$ and less weights on the discontinuous part of $u$.

Finally, the G-WENO reconstruction of the values $U =\{u_j\}_{j\in I}$ reads $$I[U](x)=\sum^n_{k = 1} \omega_k(x)P_k(x)$$.

In the following, we explicitly construct the G-WENO interpolations of order 3 and 5.
\subsection{G-WENO of order 3 (WENO23)}
The G-WENO interpolation of order 3 can be represented with two second order polynomials $P_L$ and $P_R$ built respectively on stencils $\{x_{j-1},x_j,x_{j+1}\}$ and $\{x_j,x_{j+1},x_{j+2}\}$:
$$P(x)=\omega_L P_L(x) + \omega_R P_R(x),$$
where the non-linear weights $\omega_L$ and $\omega_R$ are given by
\begin{align}\label{weight}
\omega_\ell=\frac{\alpha_\ell}{\sum_\ell\alpha_\ell},\quad  \alpha_\ell=\frac{C_\ell}{(\epsilon+\beta_\ell)^2}, \quad \ell=L,R
\end{align}
with
\begin{align*}
C_L=\frac{x_{j+2}-x}{3\Delta x}, \quad C_R=\frac{x-x_{j-1}}{3\Delta x},
\end{align*}

and
\begin{align*}
\beta_L&= \frac{13}{12}v_{j-1}^2 + \frac{16}{3}v_{j}^2 +\frac{25}{12}v_{j+1}^2 - \frac{13}{3}v_{j-1}v_{j} + \frac{13}{6}v_{j-1}v_{j+1} - \frac{19}{3}v_{j}v_{j+1}\cr
\beta_R&= \frac{13}{12}v_{j}^2 + \frac{16}{3}v_{j+1}^2 +\frac{25}{12}v_{j+2}^2 - \frac{13}{3}v_{j}v_{j+1} + \frac{13}{6}v_{j}v_{j+2} - \frac{19}{3}v_{j+1}v_{j+2}.
\end{align*}
\subsection{G-WENO of order 5 (WENO35)}
For the G-WENO interpolation of order 5, we use
third order polynomials $P_L$, $P_C$ and $P_R$ built respectively on stencils $\{x_{j-2},x_{j-1},x_j,x_{j+1}\}$, $\{x_{j-1},x_{j},x_{j+1},x_{j+2}\}$ and $\{x_{j},x_{j+1},x_{j+2},x_{j+3}\}$:
$$P(x)=\omega_L P_L(x) + \omega_C P_C(x) +\omega_R P_R(x),$$
where the non-linear weights $\omega_L$, $\omega_C$ and $\omega_R$ are given by
\begin{align*}
\omega_\ell=\frac{\alpha_\ell}{\sum_\ell\alpha_\ell},\quad  \alpha_\ell=\frac{C_\ell}{(\epsilon+\beta_\ell)^2}, \quad \ell=L,C,R
\end{align*}
with
\begin{align*}
C_L=\frac{(x-x_{j+2})(x-x_{j+3})}{20{\Delta x}^2}, \quad C_C=-\frac{(x-x_{j-2})(x-x_{j+3})}{10{\Delta x}^2}, \quad C_R=\frac{(x-x_{j-2})(x-x_{j-1})}{20{\Delta x}^2}
\end{align*}
and
\begin{align*}
\beta_L=& \frac{407}{90}v_{j+1}^2 + \frac{721}{30}v_{j}^2 +\frac{248}{15}v_{j-1}^2 +\frac{61}{45}v_{j-2}^2 - \frac{1193}{60}v_{j+1}v_{j-2}+ \frac{439}{30}v_{j+1}v_{j-1}\cr   &- \frac{683}{180}v_{j+1}v_{j-2}- \frac{2309}{60}v_{j}v_{j-1}+ \frac{309}{30}v_{j}v_{j-2}-\frac{553}{60}v_{j-1}v_{j-2}\cr
\beta_C=& \frac{61}{45}v_{j-1}^2 + \frac{331}{30}v_{j}^2 +\frac{331}{30}v_{j+1}^2 +\frac{61}{45}v_{j+2}^2 - \frac{141}{20}v_{j-1}v_{j} + \frac{179}{30}v_{j-1}v_{j+1} \cr
&- \frac{293}{180}v_{j-1}v_{j+2}- \frac{1259}{60}v_{j}v_{j+1}+ \frac{179}{30}v_{j}v_{j+2}- \frac{141}{20}v_{j+1}v_{j+2}\cr
\beta_R=& \frac{407}{90}v_{j}^2 + \frac{721}{30}v_{j+1}^2 +\frac{248}{15}v_{j+2}^2 +\frac{61}{45}v_{j+3}^2 - \frac{1193}{60}v_{j}v_{j+3}+ \frac{439}{30}v_{j}v_{j+2}\cr   &- \frac{683}{180}v_{j}v_{j+3}- \frac{2309}{60}v_{j+1}v_{j+2}+ \frac{309}{30}v_{j+1}v_{j+3}-\frac{553}{60}v_{j+2}v_{j+3}.
\end{align*}






%

\section{Details on the stability analysis}\label{StabilityCalculus} 
Here we find conditions such that Eq. (\ref{Ey}) is satisfied. In order to make clear the calculation in the method (\ref{DIRK3g}) we take $c_1 = \gamma_1$ and $b_3 = \gamma_2$ then we get for the stability function (see \cite{HW}). Note that it is sufficient to have all $E_{2j}\ge 0$ for the $I$-stability. These are the conditions that we actually use, in order to simplify the analysis.

We consider
\begin{equation}\label{SFs}
R(z) = \frac{P(z)}{Q(z)} = \frac{p_0 + p_1 z + p_2z^2 + p_3z^3}{q_0 - q_1 z + q_2z^2 - q_3z^3}
\end{equation} 
with with the following quantities:
\begin{align}\label{pp}
p_0 &= 1, \quad p_1 = \left(\frac{q_0}{1!} -\frac{q_1}{0!} \right), \quad p_2 = \left(\frac{q_0}{2!} -\frac{q_1}{1!} +  \frac{q_2}{0!} \right), \quad p_3 = \left(\frac{q_0}{3!} -\frac{q_1}{2!}  + \frac{q_2}{1!} - \frac{q_3}{0!} \right)
\end{align}
and
\begin{align}\label{qq}
q_0 &= 1, \quad q_1 = \gamma_1 + \gamma_2 + \gamma_3, \quad q_2 = \gamma_1\gamma_2 + \gamma_1\gamma_3 + \gamma_2\gamma_3, \quad q_3 = \gamma_1\gamma_2\gamma_3
\end{align}
and from (\ref{Ey}) we have
\begin{align}
E_{2} &= (q_1^2 -p_1^2) - 2(q_2q_0 - p_2p_0)\ge 0,\\
E_{4} &= (q_2^2 -p_2^2) - 2(q_3q_1 - p_3p_1) \ge 0,\\
E_{6} &= q_3^2 -p_3^2 \ge 0.
\end{align}

By (\ref{qq}) it follows $E_2 = 0$, and by SA we have $R(\infty) = 0$, and from (\ref{SFs}) we get $p_3 = 0$ and then $E_{6} = q_3^2 \ge 0$.  
Now we compute $E_4$, and by (\ref{pp}-\ref{qq}) we get $E_{4} = (q_2^2 -p_2^2) - 2q_3q_1$. From $p_3 = 0$, it follows 
\[
p_3 = \frac{1}{6} - \frac{q_1}{2} + q_2
\]
and substituting we obtain $E_4 = 8q_1  - 12q_2 -3\ge 0$.

Now if we substitute the quantities (\ref{qq}) in $E_4$, and using (\ref{Cond3}) we get a function that depends on $\gamma_1$ and $c_2$
\[
-	\frac{S}{(3\gamma_1 - 1)(\gamma_1-1)^2(c_2-1)}\ge 0
\]
with 
\[
S = (108c_2^2-72c_2+18)\gamma_1^4+(-144c_2^2+105c_2-33)\gamma_1^3+(84c_2^2-69c_2+24)\gamma_1^2+(-24c_2^2+21c_2-7)\gamma_1+3c_2^2-3c_2+1.
\]
The funciton $S$ is always positive for $\gamma_1 = c_2 \ge 0$, then we have that $E_4 \ge 0$, if 
\[
\gamma_1 \le 1/3, \> c_2\ge1, \> \textrm{or} \>
\gamma_1 \ge 1/3, \> c_2\le1 .
\]

Now in order to justify the requirement to choose $\gamma$ in the intervals (\ref{intervals}) we consider again $E_4 = 8q_1  - 12q_2 -3\ge 0$.

In (\ref{DIRK3bis}) with $\gamma_1 = \gamma_3 = \gamma$, we compute from (\ref{Cond3}) $b_2$, $c_2$ and $\gamma_2$
as functions of  $\gamma$:
\begin{equation}\label{Cond3_bis}
b_2 = 
-\frac{3}{4}\frac{(2\gamma^2-4\gamma + 1)^2}{3\gamma^3 - 9\gamma^2 + 6 \gamma -1}, \quad c2=\frac{1}{3}\frac{(6\gamma^2-9\gamma+2}{(2\gamma^2-4\gamma+1)}, \quad
\gamma_2 =\frac{1}{3}\frac{(6\gamma^2-6\gamma+1}{(2\gamma^2-4\gamma+1)}
\end{equation}
and $b_1 = 1-b_2-\gamma$.
Furthermore it follows: $q_1 = (2\gamma + \gamma_2)$ and $q_2 = (2\gamma_2 \gamma + \gamma^2)$ and substituting this values in $E_4$ we get
\begin{equation}\label{g2}
\gamma_2 \ge \frac{3-16\gamma + 12\gamma^2}{8-24\gamma}.
\end{equation}

Now substituting $\gamma_2$ from (\ref{Cond3_bis}) in (\ref{g2}) and solving this inequality for $\gamma$ we get (\ref{intervals}).

\section{Conservation Error Estimates for discrete moments}\label{Conservation error Estimates}

In this section, we carry out some elementary conservation error estimate for each of the schemes derived so far. For simplicity we denote macroscopic moments
$m_i^n:=(\rho_{i}^{n},\rho_{i}^{n}U_{i}^{n},E_{i}^{n})^T$, final time $T^f$ and tolerance $tol$.

Now, we give discrete conservation error estimates with high order C-SL schemes with the discrete Maxwellian.
\begin{proposition}\label{proposition dirk}
	In the periodic boundary condition, conservation error estimates for the DIRK scheme of order $s=1,2,3$ in mass, momentum and energy are given by
	\begin{align*}
	\left\| \sum_{i=1}^{N_x} (m_i^{N_t}
	- m_i^{0})\Delta x \right\|_\infty
	&\leq \left(\sum_{k=1}^{s-1} |b_{k}| +|b_s|\right)\frac{ N_t\Delta{t}}{ \kappa+ b_s\Delta{t} } (x_{max}-x_{min}) tol.
	\end{align*}	
	where $b_k$, $k=1,...,s$ are determined for each $s=1,2,3$
\end{proposition}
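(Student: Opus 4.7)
The plan is to exploit the two-step structure of the DIRK algorithm (non-conservative prediction followed by flux-form conservative correction) and to track how the tolerance $tol$ of every discrete Maxwellian solve propagates into the discrete moments of $f^{n+1}-f^n$. I write $M[g]:=\sum_{j=0}^{N_v}\phi(v_j)\,g_j\,\Delta v$ for the discrete moment triple, so that $m_i^n=M[f^n_{i,\cdot}]$. The strategy is in three steps: (a) the conservative convection step preserves $\sum_i M[\,\cdot\,]\Delta x$ exactly for each fixed $v_j$; (b) each intermediate stage satisfies $\|M[d\mathcal{M}^{(\ell)}-f^{(\ell)}]\|_\infty\le tol$; (c) the implicit collision step of the corrector can be inverted in closed form at the level of moments, producing the factor $\kappa/(\kappa+b_s\Delta t)$ together with a weighted sum $\sum_k|b_k|$ of tolerance contributions.

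For step (a), I would take $\phi$-moments in
\[
f^{*}_{i,j}=f^{n}_{i,j}+\frac{\Delta t}{\Delta x}\sum_{\ell=1}^s b_\ell\bigl(\widehat F^{(\ell)}_{i+1/2,j}-\widehat F^{(\ell)}_{i-1/2,j}\bigr)
\]
and sum over $i\in\{1,\dots,N_x\}$; by the periodic boundary condition the spatial flux sum telescopes to zero for each fixed $j$, so $\sum_{i=1}^{N_x}M[f^{*}_{i,\cdot}-f^{n}_{i,\cdot}]\Delta x=0$, and the per-step conservation defect is reduced to $\sum_i M[f^{n+1}_{i,\cdot}-f^{*}_{i,\cdot}]\Delta x$.

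For step (b), I would take $\phi$-moments of the stage relation and use the defining tolerance of $d\mathcal{M}^{(\ell)}$, i.e.\ equation \eqref{dMRK} understood as an equality within tolerance: substituting $M[d\mathcal{M}^{(\ell)}]=M[f^{(\ell,0)}+\Delta t\sum_{m<\ell}a_{\ell m}K^{(\ell,m)}]+\varepsilon^{(\ell)}$ with $\|\varepsilon^{(\ell)}\|_\infty\le tol$ into the moment version of the stage equation and solving the resulting scalar implicit relation for $\delta^{(\ell)}:=M[d\mathcal{M}^{(\ell)}-f^{(\ell)}]$ gives $\delta^{(\ell)}=\kappa\,\varepsilon^{(\ell)}/(\kappa+a_{\ell\ell}\Delta t)$, hence $\|\delta^{(\ell)}\|_\infty\le tol$ and $\|M[K^{(\ell)}]\|_\infty\le tol/\kappa$. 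For step (c), the same manipulation applied to the corrector, together with $\|M[d\mathcal{M}^{(*)}-f^{*}]\|_\infty\le tol$, yields
\[
\Bigl(1+\frac{b_s\Delta t}{\kappa}\Bigr)M[f^{n+1}-f^{*}]=\Delta t\sum_{\ell=1}^{s-1}b_\ell M[K^{(\ell)}]+\frac{b_s\Delta t}{\kappa}\varepsilon^{(*)},
\]
from which $\|M[f^{n+1}_{i,\cdot}-f^{*}_{i,\cdot}]\|_\infty\le\frac{\Delta t}{\kappa+b_s\Delta t}\bigl(\sum_{k=1}^{s-1}|b_k|+|b_s|\bigr)tol$. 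Summing in $i$ produces the factor $N_x\Delta x=x_{\max}-x_{\min}$, and telescoping over $N_t$ time steps delivers the bound in the statement.

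The main technical obstacle is step (b): because each intermediate $d\mathcal{M}^{(\ell)}$ is matched to the auxiliary quantity $f^{(\ell,0)}+\Delta t\sum_{m<\ell}a_{\ell m}K^{(\ell,m)}$ rather than to $f^{(\ell)}$ itself, one must exploit the implicit structure of the DIRK stage to conclude that $M[d\mathcal{M}^{(\ell)}-f^{(\ell)}]$ remains of size $tol$; otherwise the intermediate contributions $\Delta t\,b_\ell M[K^{(\ell)}]$ would carry an uncontrolled $1/\kappa$ factor and the bound would blow up in the fluid limit $\kappa\to 0$, destroying the asymptotic conservation of the scheme.
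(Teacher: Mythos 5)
Your proposal is correct and follows essentially the same route as the paper's proof: take $\phi$-moments, use periodicity to cancel the conservative flux sum, isolate the implicit $b_s$ term to produce the factor $(1+b_s\Delta t/\kappa)^{-1}$, bound each Maxwellian mismatch by $tol$, and telescope over the $N_t$ time steps. Your step (b) is in fact slightly more careful than the paper, which simply asserts $\|M[d\mathcal{M}^{(k)}-f^{(k)}]\|_\infty\le tol$ for the intermediate stages, whereas you derive it from the defining relation \eqref{dMRK} via the implicit stage equation; this is a refinement within the same argument rather than a different approach.
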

\begin{proof}
	The DIRK scheme of order $s$ is given by
	\begin{align}\label{dirk prop}
	f_{i,j}^{n+1}= {f^*}_{i,j}^{n+1} +  \frac{\Delta t}{\kappa} \sum_{k=1}^{s} b_{k} \left({d\mathcal{M}}_{i,j}^{(k),n+1} - f_{i,j}^{(k),n+1} \right),
	\end{align}
	where ${f^*}_{i,j}^{n+1}={f}_{i,j}^{n} - \sum_{k=1}^{s} b_{k} \frac{\Delta t}{\Delta x}\left(\widehat{F}_{i+\frac{1}{2},j}^{(k),n} -\widehat{F}_{i-\frac{1}{2},j}^{(k),n}\right)$ , ${d\mathcal{M}}_{i,j}^{(s),n+1}={d\mathcal{M}^*}_{i,j}^{(s),n+1}$ and ${f}_{i,j}^{(s),n+1}$ is the precomputed value at $t=t^{n+1}$ for each grid points using classical schemes. From (\ref{dirk prop}), we have
	\begin{equation}\label{dirk prop sum}
	\begin{split}
	&\left(1+ b_s\frac{\Delta{t}}{\kappa}\right)\sum_{i=1}^{N_x}\sum_{j=0}^{N_v}\left(f_{i,j}^{n+1} - {f^*}_{i,j}^{n+1} \right) \phi(v_j)\Delta v \Delta x\\
	&=\sum_{i=1}^{N_x}\sum_{j=0}^{N_v} \left[\frac{\Delta t}{\kappa} \sum_{k=1}^{s-1} b_{k} \left({d\mathcal{M}}_{i,j}^{(k),n+1} - f_{i,j}^{(k),n+1} \right) + \frac{\Delta t}{\kappa}b_s \left({d\mathcal{M}}_{i,j}^{(s),n+1} - {f^*}_{i,j}^{n+1} \right) \right]  \phi(v_j)\Delta v \Delta x.
	\end{split}
	\end{equation}
	Using
	\begin{align*}
	\max_{i} \left\|\sum_{j=0}^{N_v} \left[ {f}_{i,j}^{(k),n+1}-{d\mathcal{M}}_{i,j}^{(k),n+1}  \right]  \phi(v_j) \Delta v \right\|_\infty
	&\leq  tol,
	\end{align*}
	we can get from (\ref{dirk prop sum})
	\begin{align*}
	&\left\| \left(1+ b_s\frac{\Delta{t}}{\kappa}\right)  \sum_{i=1}^{N_x}\sum_{j=0}^{N_v}\left(f_{i,j}^{n+1} - {f^*}_{i,j}^{n+1} \right) \phi(v_j)\Delta v \Delta x \right\|_\infty \cr
	&\leq \frac{\Delta t}{\kappa}\sum_{i=1}^{N_x}\left(\sum_{k=1}^{s-1} |b_{k}| +|b_s|\right) tol  \Delta x\cr
	&= \left(\sum_{k=1}^{s-1} |b_{k}| +|b_s|\right) \frac{\Delta{t}}{\kappa} (x_{max}-x_{min}) tol.
	\end{align*}
	For any dirk scheme of order $s$, we have $b_s>0$, which implies
	\begin{align*}
	& \left\|\sum_{i=1}^{N_x}\left(f_{i,j}^{n+1} - {f^*}_{i,j}^{n+1} \right) \phi(v_j)\Delta v \Delta x \right\|_\infty \leq  \left(\sum_{k=1}^{s-1} |b_{k}| +|b_s|\right)\frac{ \Delta{t}}{ \kappa+ b_s\Delta{t} } (x_{max}-x_{min}) tol.
	\end{align*}
	Moreover, the periodic boundary condition gives
	\begin{align*}
	\sum_{i=1}^{N_x}\sum_{j=0}^{N_v}{f^*}_{i,j}^{n+1} \phi(v_j)\Delta v \Delta x
	&=\sum_{i=1}^{N_x}\sum_{j=0}^{N_v} f_{i,j}^n  \phi(v_j)\Delta v \Delta x,
	\end{align*}	
	and hence
	\begin{align*}
	& \left\| \sum_{i=1}^{N_x} (m_i^{n+1}
	- m_i^{n})\Delta x \right\|_\infty \leq \left(\sum_{k=1}^{s-1} |b_{k}| +|b_s|\right)  \frac{\Delta{t}}{\kappa + b_s\Delta{t}} (x_{max}-x_{min}) tol.
	\end{align*}
	Finally, we can conclude that
	\begin{align*}
	\left\| \sum_{i=1}^{N_x} (m_i^{N_t}
	- m_i^{0})\Delta x \right\|_\infty &\leq  \sum_{i=1}^{N_x}\sum_{r=1}^{N_t}  \left\| (m_i^{r}
	- m_i^{r-1})\Delta x \right\|_\infty   \cr
	&\leq \left(\sum_{k=1}^{s-1} |b_{k}| +|b_s|\right)\frac{ N_t\Delta{t}}{ \kappa+ b_s\Delta{t} } (x_{max}-x_{min}) tol.
	\end{align*}	
\end{proof}

Similar results hold for the BDF methods, which can be derived from similar (but more tedius) argument. We present it without proof.
\begin{proposition}\label{proposition bdf}
	In the periodic boundary condition, conservation error estimates for the BDF scheme of order $s=2,3$ in mass, momentum and energy are given by	
	\begin{align}\label{bdf estimate}
	\begin{split}
	&\left\| \sum_{i=1}^{N_x} (m_i^{N_t}
	- m_i^{0})\Delta x \right\|_\infty \\
	&\qquad \qquad \leq   \gamma_s \bigg( \frac{ (N_t-s)\beta_s \Delta{t}}{ \kappa+ \beta_s\Delta{t} } +  \left(\sum_{k=1}^{s-1} |b_{k}| +|b_s|\right)\frac{s\Delta{t}}{ \kappa+ b_s\Delta{t} }\bigg)    (x_{max}-x_{min}) tol ,			
	\end{split}	
	\end{align}	
	where $b_k$, $k=1,...,s$ and $\beta_s$ are determined for each $s=2,3$ and $\gamma_2=\frac{3}{2}$ and $\gamma_3=\frac{146}{11}$.
\end{proposition}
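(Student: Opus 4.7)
The plan is to mirror the DIRK argument of Proposition B.1, splitting the $N_t$ time steps into the first $s$ startup steps (carried out by a DIRK scheme of the same order) and the remaining $N_t - s$ steps (produced by BDF$_s$), and then combining the two contributions through the stability of the BDF recurrence.

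Starting from the BDF update
\[
f_{i,j}^{n+1} = \sum_{k=1}^{s} a_k f_{i,j}^{n+1-k} - \beta_s \frac{\Delta t}{\Delta x}\left(\widehat{F}^{(1)}_{i+1/2,j} - \widehat{F}^{(1)}_{i-1/2,j}\right) + \beta_s \frac{\Delta t}{\kappa}\left(d\mathcal{M}^{n+1}_{i,j} - f^{n+1}_{i,j}\right),
\]
I would multiply by $\phi(v_j)$, sum over $j$ and $i$, and use the periodic boundary condition to cancel the telescoping flux contributions. Invoking the tolerance (3.7) on the discrete Maxwellian together with the fact that $\sum_{k=1}^{s} a_k = 1$ for both BDF2 and BDF3, the scheme collapses to a scalar recurrence for the total moment $M^n := \sum_{i=1}^{N_x} m_i^n \Delta x$:
\[
M^{n+1} = \sum_{k=1}^{s} a_k M^{n+1-k} + \eta^n, \qquad \|\eta^n\|_\infty \leq \frac{\beta_s \Delta t}{\kappa + \beta_s \Delta t}(x_{max}-x_{min})\,tol.
\]
Setting $E^n := M^n - M^0$ and once more using $\sum_k a_k = 1$, the error $E^n$ satisfies the same homogeneous recurrence, forced by $\eta^n$.

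The key reduction is the substitution $D^n := E^n - E^{n-1}$, which eliminates the unit characteristic root introduced by the conservation law. For BDF2 it produces the scalar geometric recurrence $D^{n+1} = \tfrac{1}{3} D^n + \eta^n$; iterating and then summing $E^{N_t} = E^{s-1} + \sum_{n=s}^{N_t} D^n$ yields
\[
\|E^{N_t}\|_\infty \leq \tfrac{3}{2}\|E^{s-1}\|_\infty + \tfrac{3}{2}(N_t - s)\max_n \|\eta^n\|_\infty,
\]
so that the amplification constant $\gamma_2 = 3/2$ arises from the geometric sum $\sum_{m \geq 0}(1/3)^m$. For BDF3 the same substitution gives the second order recurrence $D^{n+1} - \tfrac{7}{11} D^n + \tfrac{2}{11} D^{n-1} = \eta^n$, whose characteristic roots $(7 \pm i\sqrt{39})/22$ both have modulus $\sqrt{2/11} < 1$. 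The associated Green function therefore decays geometrically and is absolutely summable, and bounding the $\ell^1$-norm of its cumulative sequence produces the analogous estimate with amplification constant $\gamma_3 = 146/11$.

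The startup errors $\|E^r\|_\infty$ for $r \leq s-1$ are controlled by Proposition B.1 applied to the DIRK scheme of order $s$ over $s$ steps; this accounts for the second bracketed term in (B.5) with multiplier $s \Delta t/(\kappa + b_s \Delta t)$. Combining this startup bound with the BDF amplification then yields the stated estimate. The main obstacle is the explicit evaluation of $\gamma_3 = 146/11$: for BDF2 the analysis reduces to an elementary geometric series, but for BDF3 one must carefully evaluate the $\ell^1$-norm of the Green function of a second order difference operator with complex conjugate roots and then assemble these bounds while correctly accounting for both the $D^{s-1}$ startup term and the $N_t - s$ forcing terms $\eta^n$. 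This is the source of the tedium the authors cite when omitting the detailed proof.
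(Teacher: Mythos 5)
The paper does not actually prove this proposition: the authors state only that it ``can be derived from a similar (but more tedious) argument'' to Proposition~\ref{proposition dirk} and omit the details. So the only benchmark is the DIRK proof, and your reduction does mirror it faithfully where it can: summing the conservative BDF update against $\phi(v_j)$ over $i,j$ with periodic boundaries kills the flux differences, the implicit collision step combined with the Newton tolerance \eqref{dm newton} yields a per-step defect bounded by $\beta_s\Delta t/(\kappa+\beta_s\Delta t)\,(x_{max}-x_{min})\,tol$, and consistency $\sum_k a_k=1$ turns the total-moment error into a linear multistep recurrence. Your elimination of the unit root via $D^n=E^n-E^{n-1}$ is correct (for BDF2 it gives $D^{n+1}=\tfrac13 D^n+\eta^n$, and $\gamma_2=3/2=\sum_{m\ge0}3^{-m}$ does match the stated constant; for BDF3 the reduced recurrence $D^{n+1}-\tfrac{7}{11}D^n+\tfrac{2}{11}D^{n-1}=\eta^n$ and the root modulus $\sqrt{2/11}$ are also correct). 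The treatment of the startup values via Proposition~\ref{proposition dirk} with $N_t$ replaced by $s$ is exactly what the second bracketed term in \eqref{bdf estimate} encodes.

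The genuine gap is quantitative: the proposition is an explicit inequality, and the one thing that distinguishes it from a qualitative stability statement is the constant $\gamma_3=146/11$, which you assert rather than derive. Your description of the BDF3 step --- ``bounding the $\ell^1$-norm of the cumulative sequence of the Green function'' --- does not obviously produce $146/11\approx 13.27$: a sharp bound on the partial sums of the Green function of $1-\tfrac{7}{11}z+\tfrac{2}{11}z^2$ gives a constant of order $2$--$3$, while the crude majorant $|D^{n+1}|\le\tfrac{7}{11}|D^n|+\tfrac{2}{11}|D^{n-1}|+|\eta^n|$ gives $11/2$; neither matches. A smaller constant would still imply the stated bound a fortiori, but you must actually exhibit one and verify it does not exceed $146/11$, and you must also check that the homogeneous part of the $D$-recurrence amplifies the startup error $E^{s-1}$ (and $E^{s-2}$) by no more than the same $\gamma_3$, since in \eqref{bdf estimate} the single factor $\gamma_s$ multiplies both bracketed terms. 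Until that computation is carried out, the proposal is a correct proof skeleton but not yet a proof of the inequality as stated.
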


\bibliographystyle{amsplain}
\bibliography{references}

\end{document}